\newtheorem{lemma1}{}[section]
\newenvironment{lemma}{\begin{lemma1}{\bf Lemma.}}{\end{lemma1}}
\newenvironment{example}{\begin{lemma1}{\bf Example.}\rm}{\end{lemma1}}
\newenvironment{theorem}{\begin{lemma1}{\bf Theorem.}}{\end{lemma1}}
\newenvironment{proposition}{\begin{lemma1}{\bf Proposition.}}{\end{lemma1}}
\newenvironment{remark}{\begin{lemma1}{\bf Remark.}\rm}{\end{lemma1}}
\newenvironment{definition}{\begin{lemma1}{\bf Definition.}}{\end{lemma1}}
\newenvironment{conjecture}{\begin {lemma1}{\bf Conjecture.}}{\end{lemma1}}
\newenvironment{remark*}{{\bf Remark.}}{}
\newenvironment{example*}{{\bf Example.}}{}
\newcommand{\Q}{\ensuremath{\mathbb{Q}}}
\newcommand{\Z}{\ensuremath{\mathbb{Z}}}
\newcommand{\C}{\ensuremath{\mathbb{C}}}
\newcommand{\N}{\ensuremath{\mathbb{N}}}
\newcommand{\PP}{\ensuremath{\mathbb{P}}}
\newcommand{\holom}[3]{\ensuremath{#1\colon #2  \rightarrow #3}}
\newcommand{\fibre}[2]{\ensuremath{#1^{-1} (#2)}}
\newcommand\sO{{\mathcal O}}
\DeclareMathOperator*{\red}{red}
\DeclareMathOperator*{\nons}{nons}
\newcommand\pic{\ensuremath{\mbox{Pic}}}
\newcommand{\chow}[1]{\ensuremath{\mathcal{C}(#1)}}
\DeclareMathOperator*{\Rat}{RatCurves^n}
\newcommand{\Hilb}{\ensuremath{\mathcal{H}}}
\newcommand{\Univ}{\ensuremath{\mathcal{U}}}
\title{Mori contractions of maximal length} 
\date{\today}
\author{Andreas H\"oring}
\author{Carla Novelli}
\subjclass[2000]{14E30, 14D06, 14J40, 14J45}
\keywords{Mori contraction, length of extremal rays, rational curves, degenerations of projective spaces}
\thanks{A.H. was partially supported by the A.N.R. project ``CLASS''}
\address{Andreas H\"oring, Universit{\'e} Pierre et Marie Curie, Institut de math{\'e}matiques de Jussieu,
Projet Topologie et g{\'e}om{\'e}trie alg{\'e}briques, Case 247,  4 place Jussieu, 75005 Paris, France}
\email{hoering@math.jussieu.fr}
\address{Carla Novelli, Dipartimento di Matematica, Universit\`a degli Studi di Padova, via Trie\-ste 63, I-35121 Padova, Italy}
\email{novelli@math.unipd.it}
\begin{document}

\begin{abstract}
We prove a relative version of the theorem of Cho, Miyaoka and Shepherd-Barron$\colon$
a Mori fibre space of maximal length is birational to a projective bundle.
\end{abstract}

\maketitle

\section{Introduction}

\subsection{Motivation}
Let $X$ be a Fano manifold of dimension $d$ with Picard number one, and denote by $H$ the ample
generator of the Picard group. Let $i(X) \in \N$ be such that $K_X^* \equiv i(X) H$; by a classical
theorem of Kobayashi and Ochiai \cite{KO73} one has $i(X) \leq d+1$ and equality holds if and only
if $X$ is isomorphic to the projective space $\PP^d$. If one tries to understand Fano manifolds
of higher Picard number the index $i(X)$ is less useful, since it can be equal to one even for 
very simple manifolds like $\PP^d \times \PP^{d+1}$. For these manifolds the pseudoindex
$$
\min \{
K_X^* \cdot C \ | \ C \subset X \ \mbox{a rational curve}
\}
$$
yields much more precise results, the most well-known being the theorem of Cho, Miyaoka
and Shepherd-Barron:
\begin{theorem} \cite[Cor.0.3]{CMS02} \cite[Thm.1.1]{Keb02} \label{theoremCMS}
Let $X$ be a projective manifold of dimension $d$ such that for every rational curve $C \subset X$,
we have $K_X^* \cdot C \geq d+1$. Then $X$ is isomorphic to the projective space $\PP^d$.
\end{theorem}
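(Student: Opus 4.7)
My plan is to follow the strategy of Kebekus, analysing the variety of tangent directions to minimal rational curves through a general point.

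First I would fix a minimal covering family of rational curves. A standard bend-and-break argument using the large lower bound on the pseudoindex shows that $X$ is uniruled, so let $\mathcal{H}$ be a minimal covering family with general member $C$ a free rational curve whose anticanonical degree equals the pseudoindex of $X$, hence is $\geq d+1$. For a general $x \in X$, let $\mathcal{H}_x$ be the sublocus of curves through $x$. Minimality forbids bend-and-break at the fixed point, so $\mathcal{H}_x$ is projective and
\[
\dim \mathcal{H}_x \geq -K_X \cdot C - 2 \geq d - 1.
\]
Since distinct members of a minimal family meet in at most finitely many points, one also has $\dim \mathcal{H}_x \leq d-1$; the two bounds force equalities throughout, so $-K_X \cdot C = d+1$ and $\dim \mathcal{H}_x = d-1$.

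Next I would study the tangent map
\[
\tau_x \colon \mathcal{H}_x \dashrightarrow \PP(T_xX),
\]
sending a curve smooth at $x$ to its tangent direction there. The crucial technical input is Kebekus's theorem asserting that $\tau_x$ is a finite morphism on the normalisation of $\mathcal{H}_x$; combined with the equality of dimensions above, it is then surjective. For a general minimal free rational curve the pullback $f^*T_X$ has the standard splitting $\sO_{\PP^1}(2) \oplus \sO_{\PP^1}(1)^{\oplus (d-1)}$, and a direct computation of the differential of $\tau_x$ in terms of this splitting shows that $\tau_x$ is unramified, hence birational. A finite birational morphism onto a normal target is an isomorphism, so $\tau_x \colon \mathcal{H}_x \cong \PP(T_xX)$.

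Finally, I would reconstruct $X$ from this data. The universal family over $\mathcal{H}_x \cong \PP^{d-1}$ is a $\PP^1$-bundle $\mathcal{U}_x \to \mathcal{H}_x$ carrying a distinguished section $\sigma$ corresponding to the marked point, and the evaluation morphism $e\colon \mathcal{U}_x \to X$ contracts $\sigma$ to $x$. A $\PP^1$-bundle over $\PP^{d-1}$ admitting such a contractible section is isomorphic to the blow-up of $\PP^d$ at a point, so $e$ factors through the blow-down and yields a surjective birational morphism $\PP^d \to X$; since $\PP^d$ has Picard number one, no divisor can be contracted, and the map must be an isomorphism. The main obstacle throughout is Kebekus's finiteness of $\tau_x$, which is the technical heart of the argument; once it is granted, the surjectivity, birationality, and final identification with $\PP^d$ are all forced essentially uniquely by the tight numerical constraints coming from the hypothesis $-K_X \cdot C \geq d+1$.
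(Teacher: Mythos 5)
A preliminary remark on the comparison itself: the paper does not prove Theorem \ref{theoremCMS} --- it is imported as a black box from \cite{CMS02} and \cite{Keb02} --- so there is no in-paper argument to measure you against. What you have written is an outline of the proof in \cite{Keb02}, i.e.\ of the cited source, and its architecture is the right one: minimal covering family, the dimension count forcing $-K_X\cdot C=d+1$ and $\dim\mathcal{H}_x=d-1$, finiteness and surjectivity of the tangent map $\tau_x$, and Mori's reconstruction of $X$ from the $\PP^1$-bundle $\mathcal{U}_x\to\mathcal{H}_x\cong\PP^{d-1}$ with a contracted section.

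Two steps are genuinely gapped as written. First, bend-and-break cannot show that $X$ is uniruled from the stated hypothesis: the hypothesis only constrains rational curves and is vacuous if $X$ carries none (an abelian variety satisfies it), and bend-and-break needs a curve on which $K_X$ is negative to start breaking. One must either assume $X$ is covered by rational curves (as in \cite{Keb02}) or that $X$ is Fano (as in \cite{CMS02}); the statement as quoted in the paper silently carries this hypothesis, which is harmless in its applications since the relevant varieties are fibres of contractions with $K_X^*$ relatively ample, but your proposed derivation of it is not a proof. Second, and more seriously, the passage ``unramified, hence birational'' does not go through. The splitting $f^*T_X\cong\sO_{\PP^1}(2)\oplus\sO_{\PP^1}(1)^{\oplus(d-1)}$ holds only for a \emph{general} member of $\mathcal{H}_x$, so the differential computation yields at best that $\tau_x$ is unramified at general points; a finite surjective morphism between $(d-1)$-dimensional varieties that is generically unramified can perfectly well have degree greater than one (any branched double cover), so birationality does not follow. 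What is needed is generic \emph{injectivity} of $\tau_x$ --- that through a general tangent direction at $x$ there passes a unique member of the family --- and establishing this is, together with the finiteness of $\tau_x$, the technical core of Kebekus's argument rather than a formal consequence of the numerics. (A similar unexamined injectivity is hidden in your last step, where the evaluation morphism is asserted to be birational.) Until those two inputs are supplied, the proposal is a correct roadmap of \cite{Keb02} rather than a proof.
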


Fano manifolds (or more generally Fano varieties with certain singularities) are important objects
since they appear naturally in the minimal model program as the general fibres of Mori fibre spaces. 
However if one wants to get a more complete picture of Mori fibre spaces, one should also try to obtain some information on the special fibres. In the polarised setting we have a relative version
of the Kobayashi--Ochiai theorem:

\begin{theorem} \cite[Lemma 2.12]{Fuj87}, \cite{Ion86} \label{theoremfujita}
Let $X$ be a manifold and $\holom{\varphi}{X}{Y}$ a projective, equidimensional morphism of relative dimension $d$ onto a normal variety.
Suppose that the general fibre $F$ is isomorphic to  $\PP^d$ and that there exists a $\varphi$-ample Cartier divisor $A$ on $X$ such that the restriction to $F$ is isomorphic to the hyperplane divisor $H$. Then $\varphi$ is a projective bundle and $A$ is a global hyperplane divisor.
\end{theorem}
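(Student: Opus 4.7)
The plan is to realise $\varphi$ as the projectivisation of $\sE := \varphi_*\sO_X(A)$ via the relative evaluation map. The idea is standard: first show $\sE$ is locally free of rank $d+1$, then show that the tautological surjection $\varphi^*\sE \twoheadrightarrow \sO_X(A)$ defines a $Y$-morphism $X \to \PP_Y(\sE)$ which is an isomorphism fibre-by-fibre.

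First I would establish flatness of $\varphi$. Since $X$ is smooth (hence Cohen--Macaulay) and $\varphi$ is equidimensional of relative dimension $d$, miracle flatness gives flatness over the smooth locus of the normal variety $Y$, whose complement has codimension at least two. Either one extends flatness globally, or one runs the rest of the argument in codimension one and extends the $\PP^d$-bundle structure afterwards, since projective bundles extend across codimension-two subsets of a normal base. By flatness the Hilbert polynomial of the polarised pair $(F_y, A|_{F_y})$ is independent of $y$, so $(A|_{F_y})^d = 1$ for every $y$, and $A|_{F_y}$ is ample by $\varphi$-ampleness of $A$.

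The main obstacle is to upgrade these numerical constraints to the statement that \emph{every} scheme-theoretic fibre $F_y$ is isomorphic to $(\PP^d, \sO(1))$. This is a Fujita-type classification: a possibly non-reduced equidimensional polarised scheme $(F, L)$ of dimension $d$ with $L$ ample, $L^d = 1$, and the Hilbert polynomial of $(\PP^d, \sO(1))$ must itself be $(\PP^d, \sO(1))$. Ruling out reducible and non-reduced structures is the delicate point: one first shows $F_{\red}$ is irreducible of degree one with respect to $L$, applies the Kobayashi--Ochiai theorem to $F_{\red}$ to conclude $F_{\red} \cong \PP^d$ with $L|_{F_{\red}} \cong \sO(1)$, and then uses constancy of the Hilbert polynomial to exclude embedded or multiple structure.

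Once every fibre is $(\PP^d, \sO(1))$, all higher cohomology $H^i(F_y, A|_{F_y})$ vanishes for $i > 0$, so by cohomology and base change $\sE := \varphi_*\sO_X(A)$ is locally free of rank $d+1$ and commutes with base change. The natural evaluation $\varphi^*\sE \to \sO_X(A)$ is surjective since $\sO_{\PP^d}(1)$ is globally generated on each fibre, and therefore induces a $Y$-morphism $\psi\colon X \to \PP_Y(\sE)$ with $\psi^*\sO_{\PP_Y(\sE)}(1) \cong \sO_X(A)$. Fibrewise $\psi$ is the identification $\PP^d \xrightarrow{\sim} \PP(H^0(\PP^d,\sO(1))^\vee)$, hence an isomorphism; therefore $\psi$ is an isomorphism of $Y$-schemes, giving both the projective bundle structure and the identification of $A$ with the tautological hyperplane.
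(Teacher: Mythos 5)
First, a point of reference: the paper does not prove this statement --- it is quoted from \cite[Lemma 2.12]{Fuj87} and \cite{Ion86} and used as a black box (the paper's own Section 3 proves a related but more general statement by a different route, namely via unsplit families of rational curves and Theorem \ref{theoremCMSnormal} rather than via $\varphi_*\sO_X(A)$). So your proposal has to be judged on its own merits, and its overall architecture --- show every fibre is $(\PP^d,\sO(1))$, then use cohomology and base change and the evaluation map $\varphi^*\varphi_*\sO_X(A)\to\sO_X(A)$ --- is indeed the standard one and the final step is fine.

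The genuine gap is at the step you yourself flag as delicate, and the tool you invoke there does not apply. The cycle/Hilbert-polynomial argument does give that $F_y$ is irreducible and generically reduced with $A^d\cdot F_{y,\red}=1$ (and, once $F_{y,\red}\simeq\PP^d$ is known, the Hilbert polynomial does kill embedded and multiple structure). But ``$L$ ample with $L^d=1$'' does \emph{not} characterise $\PP^d$: already for $d=1$ any smooth curve of positive genus carries an ample divisor of degree one. The Kobayashi--Ochiai theorem \cite{KO73} is a statement about the index of a \emph{smooth} Fano variety, i.e.\ it needs control of $K_{F_{y,\red}}$; here $F_{y,\red}$ may be badly singular and, when $Y$ is singular at $y$, it is not even a complete intersection in $X$, so adjunction gives you nothing. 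What is actually needed at this point is either Fujita's $\Delta$-genus classification of polarised varieties with $\Delta=0$ and $L^d=1$ (which in turn requires producing $h^0(F_{y,\red},A)\ge d+1$ from semicontinuity on the non-reduced scheme-theoretic fibre --- another nontrivial step), or an argument via degenerations of the family of lines as in the paper's Proposition \ref{propositionuniversalfamily}. A secondary gap: flatness via miracle flatness is only available over $Y_{\mathrm{reg}}$, and the assertion that a projective bundle structure ``extends across codimension-two subsets of a normal base'' is not a standard fact --- it is essentially equivalent to the hard part of the theorem over $Y_{\sing}$ (note the conclusion forces $Y$ to be smooth, since $X$ is, so the singular locus of $Y$ cannot simply be avoided). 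Your Hilbert-polynomial arguments are unavailable there precisely because flatness is not yet known.
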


The existence of the $\varphi$-ample Cartier divisor $A$ being a rather restrictive condition, 
the object of this paper is to replace it by a more flexible ``numerical'' hypothesis, i.e. we prove
a relative version of Cho--Miyaoka--Shepherd-Barron theorem, even for fibrations that are not equidimensional.

\subsection{Main results}

Let $X$ be a quasi-projective manifold and let \holom{\varphi}{X}{Y} be an {\em elementary contraction}, i.e. a Mori contraction associated with an extremal ray $R$ of $X$. 
The {\em length} of the extremal ray $R$ (or length of the elementary contraction) is defined as
$$
l(R) := \min \{
K_X^* \cdot C \ | \ C \subset X \ \mbox{a rational curve s.t.} \ [C] \in R
\}.
$$
Denote by $E \subset X$ an irreducible component of the $\varphi$-exceptional locus  ($E=X$ for a contraction of fibre type),
and let $F$ be an irreducible component of a $\varphi$-fibre contained in $E$. 
Then by the Ionescu--Wi\'sniewski inequality \cite[Thm.0.4]{Ion86}, \cite[Thm.1.1]{Wis91} one has
\begin{equation} \label{eqniw}
\dim E + \dim F \geq \dim X + l(R) -1.
\end{equation}

In this paper we investigate the case when \eqref{eqniw} is an equality:
if the contraction $\varphi$ maps $X$ onto a point, Theorem \ref{theoremCMS} 
says that $X=E=F$ is a projective space; more generally, Andreatta and Wi\'sniewski conjectured that $F$ is a projective space regardless of the dimension of the target
\cite[Conj.2.6]{AW97}. We prove a strong version of this conjecture for Mori fibre spaces:

\begin{theorem} \label{theoremmain}
Let $X$ be a quasi-projective manifold that admits an elementary contraction of fibre type
\holom{\varphi}{X}{Y} onto a normal variety $Y$ such that the general fibre has dimension $d$. 
Suppose that the contraction has length $l(R) = d+1$.

If $\varphi$ is equidimensional, it is a projective bundle. 
If $\varphi$ is not equidimensional, 
there exists a commutative diagram
$$
\xymatrix{
X'   \ar[d]_{\varphi'} \ar[r]^{\mu'} & X  \ar[d]^{\varphi} 
\\
Y' \ar[r]^{\mu}  & Y
}
$$
such that $\mu$ and $\mu'$ are birational, $X'$ and $Y'$ are smooth, and $\holom{\varphi'}{X'}{Y'}$ is 
a projective bundle.
\end{theorem}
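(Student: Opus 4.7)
The plan is to combine the absolute theorem of Cho--Miyaoka--Shepherd-Barron (Theorem~\ref{theoremCMS}) with the polarised Kobayashi--Ochiai theorem (Theorem~\ref{theoremfujita}), and then to reduce the non-equidimensional case to the equidimensional one by passing to a suitable family of $\PP^d$-fibres parameterised by a Chow/Hilbert scheme. The first observation is that since $\varphi$ is elementary of fibre type, every rational curve with class in $R$ is $\varphi$-contracted and hence lies in a single $\varphi$-fibre. Consequently, for a general smooth fibre $F$ and every rational curve $C\subset F$ we have $-K_F\cdot C = -K_X\cdot C \geq l(R) = d+1$, so Theorem~\ref{theoremCMS} gives $F\simeq\PP^d$.

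Assume first that $\varphi$ is equidimensional. By the Ionescu--Wi\'sniewski inequality \eqref{eqniw} equality then holds on every fibre. I would construct a $\varphi$-ample Cartier divisor $A$ that restricts to $\sO_{\PP^d}(1)$ on the general fibre as follows: pick any $\varphi$-ample line bundle $L$, restrict to a general fibre to get $L|_F \simeq \sO(k)$ for some $k\geq 1$, and exploit that the relative Picard group of an elementary contraction has rank one in order to twist $L$ by a suitable pullback from $Y$ until $k=1$ is achieved. The length hypothesis $l(R)=d+1$ is what makes this adjustment compatible with $-K_X\cdot C = d+1$ for a minimal rational curve $C$. Once $A$ is in hand, Theorem~\ref{theoremfujita} applied to $\varphi$ and $A$ yields the projective bundle structure.

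For the non-equidimensional case, the idea is to parameterise the $\PP^d$-fibres by a moduli space. Let $\sH$ be the irreducible component of $\Rat(X)$ containing the lines of a general fibre; since the $A$-degree of a line is one, the length hypothesis forces these rational curves to be unsplit, so $\sH$ is proper. Let $Y'$ be the normalisation (followed by a desingularisation) of the irreducible component of the Chow scheme of $X$ whose general point is a fibre $[F_y]\simeq\PP^d$, and let $X'$ be a desingularisation of the universal family over $Y'$. The natural map $\mu\colon Y'\to Y$ is birational, as is the evaluation $\mu'\colon X'\to X$, because over a general point of $Y$ the universal cycle coincides set-theoretically with the honest $\varphi$-fibre. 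By construction $\varphi'\colon X'\to Y'$ is equidimensional of relative dimension $d$ with general fibre $\PP^d$ and length $d+1$, so the previous step applied to $\varphi'$ supplies the projective bundle structure.

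I expect the main obstacle to lie in the non-equidimensional step. One must show that the chosen Chow component genuinely parameterises a flat family of $d$-dimensional projective spaces and not a family of degenerate or reducible cycles over the jumping locus in $Y$; this is exactly where the equality $l(R)=d+1$ is crucial, as it prevents the family of lines from breaking off lower-degree components in any limit and hence forces the limit cycles to be honest $\PP^d$'s. A second delicate point is to check that after desingularising $Y'$ and $X'$ the morphism $\varphi'$ is still an elementary contraction of fibre type of length $d+1$, which is needed to invoke the equidimensional case as a black box.
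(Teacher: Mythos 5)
Your first step (the general fibre is $\PP^d$ via Theorem~\ref{theoremCMS}) agrees with the paper, but the reduction of the equidimensional case to Theorem~\ref{theoremfujita} contains a genuine gap that is, in fact, the whole point of the paper. Twisting a $\varphi$-ample line bundle $L$ by a pullback $\varphi^*M$ does not change $L|_F$ at all, since $\varphi^*M$ restricts trivially to a fibre; so this operation cannot lower $k$ to $1$. What the rank-one statement actually gives is that the image of the restriction map $\pic(X)\to\pic(F)\simeq\Z[H]$ is a subgroup $m\Z$ with $m\mid d+1$ (because $K_X^*|_F\equiv (d+1)H$), and there is no a priori reason to have $m=1$. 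The existence of a Cartier divisor restricting to $H$ is exactly the ``rather restrictive condition'' of Theorem~\ref{theoremfujita} that the paper sets out to remove; Section~\ref{sectionscroll} shows that even producing such a divisor locally in the analytic topology requires nontrivial topological input ($H^3(U_0,\Z)$ torsion-free). So your argument assumes the hard part. The paper instead proves two substantive statements for which your proposal has no substitute: Proposition~\ref{propositionuniversalfamily} (under the numerical length condition all fibres are irreducible, generically reduced, and normalised by $\PP^d$ --- proved via properness of the family of lines, semicontinuity, Theorem~\ref{theoremCMSnormal} applied to the normalisation of each fibre component, and a degree computation on cycles) and Proposition~\ref{propositionbundle} (an induction on the relative dimension using a section through the smooth locus of the central fibre, the $\PP^1$-bundle structure of the universal family of lines through that section, bend-and-break, and Mori's contraction argument to identify the central fibre with $\PP^d$). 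Note also that the paper must first deduce smoothness of $Y$ from generic reducedness of the fibres before invoking Proposition~\ref{propositionbundle}.

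In the non-equidimensional case your construction has two further problems. First, replacing the universal family over $Y'$ by a desingularisation can destroy equidimensionality (and flatness) over the jumping locus, so you cannot then quote the equidimensional case; the paper instead takes $X'$ to be only the \emph{normalisation} of $\bar X\times_{\bar Y}Y'$, which keeps $\mu'$ finite on every $\varphi'$-fibre and hence keeps $\varphi'$ equidimensional ($X'$ is seen to be smooth only a posteriori, once $\varphi'$ is shown to be a projective bundle over the smooth $Y'$). Second, the ``delicate point'' you flag --- that $\varphi'$ should remain an elementary contraction of length $d+1$ --- is not something one should try to prove: after the birational modification the Picard number jumps and $\varphi'$ is in general not elementary. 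The paper sidesteps this entirely by working throughout with the purely numerical condition~\eqref{conditionlength} for the $\varphi'$-ample divisor $A=(\mu')^*K_X^*$, which is inherited because $\mu'$ is finite on fibres and preserves degrees of rational curves. You would need to reorganise your argument around such a numerical hypothesis rather than around the elementary-contraction property.
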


Since $X$ is smooth, Theorem \ref{theoremCMS} immediately implies that a general $\varphi$-fibre $F$ is isomorphic to $\PP^{d}$. Our contribution is to
prove that the condition on the length severely limits the possible degenerations of these projective spaces. 
The smoothness of $X$ is not essential for these degeneration results (cf. Section \ref{sectiondegeneration}),
so one can easily derive analogues of Theorem \ref{theoremmain} making some assumption
on the singularities of the general fibre (e.g. isolated LCIQ singularities  \cite{CT07}).

If the contraction $\varphi$ is birational, the situation is more complicated: if $E \subset X$ is an irreducible component of the exceptional locus
such that for a general fibre $F$ of $E \rightarrow \varphi(E)=:Z$ the inequality \eqref{eqniw} is an equality,  it is not hard to see
(\cite[Rem.12]{CMS02}, cf. Remark \ref{remarkbirationalcase}) that
the general fibre is normalised by a finite union of projective spaces. Let $\tilde E \rightarrow E$ be the normalisation and
$\holom{\tilde \varphi}{\tilde E}{\tilde Z}$ be the fibration obtained by the Stein factorisation of $\tilde E \rightarrow E \rightarrow Z$.
The general $\tilde \varphi$-fibre is a projective space and we obtain the following:

\begin{theorem} \label{theorembirational}
In the situation above, the fibration $\holom{\tilde \varphi}{\tilde E}{\tilde Z}$ is a projective bundle in codimension one. Moreover 
there exists a commutative diagram
$$
\xymatrix{
E'   \ar[d]_{\varphi'} \ar[r]^{\mu'} & \tilde E  \ar[d]^{\tilde \varphi} 
\\
Z' \ar[r]^{\mu}  & \tilde Z
}
$$
such that $\mu$ and $\mu'$ are birational, $E'$ and $Z'$ are smooth, and $\holom{\varphi'}{E'}{Z'}$ is 
a projective bundle.
\end{theorem}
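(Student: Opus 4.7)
The plan is to reduce Theorem \ref{theorembirational} to the fibre-type case proved in Theorem \ref{theoremmain} by applying the same circle of ideas to the Stein factorisation $\tilde\varphi: \tilde E \to \tilde Z$ instead of to a Mori contraction of a smooth variety. The arguments behind Theorem \ref{theoremmain} should split into a local degeneration analysis (whose smoothness assumption on the total space is, as the authors explicitly note, not essential) and a global birational modification producing a genuine projective bundle.

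First I would transfer the length hypothesis from $X$ to $\tilde E$. Pick a rational curve $C \subset X$ with $[C] \in R$ and $-K_X \cdot C = d+1$ through a general point of $E$: by equality in \eqref{eqniw} and Theorem \ref{theoremCMS} applied fibrewise, $C$ is necessarily contained in $F \cong \PP^{d}$, and deformations of $C$ sweep out the whole fibre. Lifting through the normalisation $\tilde E \to E$ and using adjunction along the smooth locus of $E$ inside $X$, one obtains a covering family of $\tilde\varphi$-contracted rational curves on $\tilde E$ of the anticanonical degree required to feed into the degeneration machinery.

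Second, I would use the degeneration results of Section \ref{sectiondegeneration} to control the fibres of $\tilde\varphi$ over codimension-one points of $\tilde Z$. Any component of such a fibre is itself a degeneration of $\PP^{d}$ satisfying the length bound, and since $\tilde E$ is smooth along a general point of each codimension-one fibre by normality, the Cartier condition rules out the non-reduced or reducible possibilities. Hence every codimension-one fibre is irreducible and isomorphic to $\PP^{d}$, which yields the projective bundle structure on a big open subset $U \subset \tilde Z$ and proves the first assertion.

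The main obstacle is the global diagram in the second part: one must produce birational models $\mu: Z' \to \tilde Z$ and $\mu': E' \to \tilde E$ with $E', Z'$ smooth and $E' \to Z'$ a genuine $\PP^{d}$-bundle. I would construct $Z'$ as a resolution through which a flattening of $\tilde\varphi$ factors, and define $E'$ either as the strict transform in $\tilde E \times_{\tilde Z} Z'$ of the preimage of $U$ or, more intrinsically, via the relative parameter space of the covering family of minimal rational curves. The projective bundle property already established on $U$ guarantees that this construction agrees with the pullback over a big open, and the remaining task is to check that the flattening really resolves all higher-codimension degenerations of $\tilde\varphi$-fibres into $\PP^{d}$'s. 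This last verification is the delicate birational-geometry content of the theorem and is likely where most of the actual work lies.
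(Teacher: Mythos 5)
Your overall strategy --- transfer a positivity condition to $\tilde E$, run the degeneration results of Section \ref{sectiondegeneration} to get the bundle structure in codimension one, then flatten, resolve the base and normalise the fibre product for the global diagram --- is the route the paper takes. However, the step where you transfer the positivity is set up in a way that would not go through. You propose to produce a covering family of $\tilde\varphi$-contracted rational curves of the right \emph{anticanonical} degree on $\tilde E$ ``using adjunction along the smooth locus of $E$ inside $X$''. Two problems. First, the degeneration machinery (Propositions \ref{propositionuniversalfamily} and \ref{propositionbundle}) never sees $K_{\tilde E}$: it is deliberately formulated for an arbitrary relatively ample $\Q$-Cartier divisor $A$ and the length condition \eqref{conditionlength} relative to $A$, precisely so that one can take $A:=\nu^* K_X^*$ for the normalisation $\nu\colon \tilde E \rightarrow E$ and never compute the canonical divisor of the (possibly badly singular) $E$; an adjunction computation of $K_{\tilde E}\cdot C$ would require control of the conormal data of $E\subset X$ along $C$, which is not available. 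Second, the relevant degree is $l(R)$, not $d+1$: in the birational boundary case $\dim E+\dim F=\dim X+l(R)-1$ with $\dim E<\dim X$ forces $l(R)\le \dim F$, and Remark \ref{remarkbirationalcase} gives $\nu^*K_X^*\equiv l(R)H$ on the general fibre $\PP^{\dim F}$. The repair is immediate and requires no adjunction: any rational curve in a $\tilde\varphi$-fibre maps to a $\varphi$-contracted curve of $X$, so its class lies in $R$ and its $K_X^*$-degree is at least $l(R)$ by the very definition of length; hence $A=\nu^*K_X^*$ satisfies \eqref{conditionlength} with $e=l(R)$.

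Two further points. For the codimension-one statement, your appeal to smoothness of $\tilde E$ at a general point of each codimension-one fibre and to a ``Cartier condition'' is not the operative mechanism; the clean argument is to cut $\tilde Z$ by general hyperplane sections to obtain a smooth curve $C$ over which $\tilde\varphi$ is automatically equidimensional (the fibre-dimension jumping locus has codimension at least two) and then to apply Proposition \ref{propositionbundle}, whose smooth-base hypothesis is exactly what makes Lemma \ref{lemmaqcartier} available. Finally, you locate the ``delicate birational-geometry content'' in checking that the flattening resolves all higher-codimension degenerations, but nothing remains to be checked at that stage: once the family is flattened over a smooth $Z'$ and $(\mu')^*\nu^*K_X^*$ is seen to be relatively ample (it is, since $\mu'$ is finite on fibres) and to satisfy \eqref{conditionlength}, Proposition \ref{propositionbundle} concludes directly. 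All of the real work is already contained in Section \ref{sectiondegeneration}, exactly as in the second half of the proof of Theorem \ref{theoremmain}.
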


While this result gives a rather precise description of the normalisation $\tilde E$, it does not prove that the fibre $F$ itself
is a projective space. However, if the contraction $\varphi$ is divisorial (so the inequality \eqref{eqniw} simplifies
to $\dim F \geq l(R)$), Andreatta and Occhetta \cite[Thm.5.1]{AO02} proved that all the nontrivial fibres of $\varphi$ have dimension $l(R)$ if and only if $X$ is the blow-up of a manifold along a submanifolds of codimension $l(R)+1$.
If one studies the proof of Kawamata's classification of smooth fourfold flips \cite[Thm.1.1]{Ka89} (which corresponds to the case $\dim X=4, \dim E=\dim F=2, l(R)=1$) one sees that for flipping contractions the proof of the normality requires completely different techniques. We leave this interesting problem for future research.

\subsection{Further developments}
Theorem \ref{theoremmain} completely determines the equidimensional fibre type contractions of maximal length: they are projective bundles.
If the fibration is not equidimensional it still gives a precise description of the Chow family defined by the fibration. 
It seems reasonable that this description allows to deduce some information about higher-dimensional fibres. For example if $X$ of dimension $n$ maps
onto a threefold $Y$, it is not hard to see that any fibre component of dimension $n-2$ is normalised by $\PP^{n-2}$.
More generally we expect that the  theory of varieties covered by high-dimensional linear spaces \cite{Ein85}, \cite{Wis91b}, \cite{BSW92}, \cite{ABW92}, \cite{Sa97}, \cite{NO11} can be applied in this context. 

Another interesting line of investigation would be to prove that under additional assumptions the fibration $\varphi$ is always equidimensional. We recall the following conjecture by Beltrametti and Sommese:

\begin{conjecture} \cite{BS93}, \cite[Conj.14.1.10]{BS95} \label{conjecturescroll}
Let $(X,L)$ be a  polarised projective manifold of dimension $n$ that is an adjunction theoretic scroll
\holom{\varphi}{X}{Y} over a normal variety $Y$ of dimension $m$. If $n \geq 2m-1$, then $\varphi$ is equidimensional.
\end{conjecture}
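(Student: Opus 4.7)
My plan is to use Theorem~\ref{theoremmain} as the main lever, reducing the conjecture to a question of equidimensionality, and then to exploit $n\geq 2m-1$ in order to rule out jumping fibres.

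First I would observe that an adjunction theoretic scroll $\holom{\varphi}{X}{Y}$ is an elementary Mori contraction of fibre type whose extremal ray has length $l(R) = n - m + 1 = d+1$, where $d := n-m$: for any curve $C$ in a fibre, the defining relation $K_X + (d+1)L \equiv \varphi^{*}H$ yields $-K_X \cdot C = (d+1)L \cdot C \geq d+1$ (since $L$ is $\varphi$-ample), with equality attained by a line in the general fibre $\PP^d$. Thus Theorem~\ref{theoremmain} applies, and its first assertion states that $\varphi$ is a projective bundle as soon as it is equidimensional. The conjecture therefore reduces to proving that $\varphi$ is equidimensional under the hypothesis $n \geq 2m-1$.

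Suppose for contradiction that some irreducible fibre component $F$ of $\varphi$ has dimension $d+k$ with $k \geq 1$, and set $y := \varphi(F)$. In the birational diagram of Theorem~\ref{theoremmain} the projective bundle $\holom{\varphi'}{X'}{Y'}$ has relative dimension $d$, so the closure of $(\mu')^{-1}(F)$, of dimension at least $d+k$, must lie over $\mu^{-1}(y)$, producing an irreducible subvariety $\Sigma \subset Y'$, contracted by $\mu$ to $y$, with $\dim \Sigma \geq k$. The $\PP^d$-subbundle $(\varphi')^{-1}(\Sigma) \to \Sigma$ then maps via $\mu'$ onto $F$, and in particular $k \leq \dim \Sigma \leq \dim \mu^{-1}(y) \leq m-1$. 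Thus the hypothesis $n \geq 2m-1$ supplies the crucial inequality $d \geq m-1 \geq k$.

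The core of the proof, and the main obstacle, is to derive a contradiction from this inequality. The relation $(K_X+(d+1)L)|_F \equiv 0$ combined with the previous step shows that $F$ is dominated by the images of the lines of the $\PP^d$-fibres of $(\varphi')^{-1}(\Sigma)$: these form an unsplit covering family of rational curves of $L$-degree one, sweeping out a $d$-dimensional locus through a general point of $F$. Combining this covering by large-dimensional linear families with the degeneration analysis of Section~\ref{sectiondegeneration} and the theory of varieties covered by high-dimensional linear spaces (cf.\ \cite{Ein85,Wis91b,BSW92,ABW92,Sa97,NO11}), one should recognise $F$ as a projective space of dimension at most $d$, contradicting $\dim F = d+k$. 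The hardest step is precisely controlling how the generic $\PP^d$-fibre of $\varphi'$ can degenerate along $\mu'$, so as to rule out configurations such as unions of projective spaces meeting along linear subspaces; here the bound $d \geq k$ enters as the threshold above which the covering family of lines is large enough to rigidify $F$ via Sato-type classification \cite{Sa97}.
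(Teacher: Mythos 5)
This statement is the Beltrametti--Sommese conjecture, and the paper does \emph{not} prove it: it is quoted from \cite{BS93}, \cite[Conj.14.1.10]{BS95}, and the authors state explicitly that beyond Wi\'sniewski's case ($L$ very ample and $n \geq 2m$) and some partial results for low-dimensional $Y$ ``this conjecture is very much open''; they even go on to pose a second, more general conjecture in the length-$(n-m+1)$ setting. So there is no proof in the paper to compare yours against, and the question is whether your argument stands on its own. It does not. Your opening reduction is vacuous: the conclusion of the conjecture \emph{is} that $\varphi$ is equidimensional, so invoking Theorem~\ref{theoremmain} to ``reduce to equidimensionality'' returns you to exactly the statement you set out to prove --- that theorem is only relevant afterwards, to upgrade an equidimensional contraction to a projective bundle, which is not what is asked. (There is also a hypothesis to check before applying it at all, namely that an adjunction-theoretic scroll is an \emph{elementary} contraction; you assert this without argument.)

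The substantive content would have to be your final paragraph, and there the argument is missing, as you yourself concede (``one should recognise $F$ as a projective space\ldots'', ``the hardest step is precisely\ldots''). Concretely: (i) the images under $\mu'$ of the $\varphi'$-fibres over $\Sigma$ are only known to be finite images of $\PP^d$ inside the possibly non-normal fibre component $F$, not linearly embedded projective spaces in any ambient projective space, so the classification theory of varieties covered by large linear spaces \cite{Ein85}, \cite{Sa97}, \cite{NO11} does not apply off the shelf; (ii) the unsplit family of $L$-degree-one rational curves sweeps out at most a $d$-dimensional locus through a general point of $F$, which for $\dim F = d+k > d$ is compatible with many degenerate configurations and yields no contradiction by itself; (iii) the inequality $d \geq m-1 \geq k$, which is where the hypothesis $n \geq 2m-1$ enters, is never actually used to close the argument. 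What you have written is a plausible plan of attack whose missing middle is precisely the open problem; it is not a proof.
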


Wi\'sniewski \cite[Thm.2.6]{Wis91b} proved this conjecture if $L$ is very ample and $n \geq 2m$, but apart from
partial results for low-dimensional $Y$, \cite{BSW92}, \cite{Somm85}, \cite{Ti10}, this conjecture is very much open.

In Section \ref{sectionscroll} we use Theorem \ref{theoremmain} to provide some evidence that a contraction of length $l(R) = n-m+1$ is always locally a scroll, so we expect that Conjecture \ref{conjecturescroll} even holds in the more general setting of Theorem \ref{theoremmain}.

\begin{conjecture} 
Let $X$ be a projective manifold of dimension $n$ that admits an elementary contraction of fibre type
\holom{\varphi}{X}{Y} onto a normal variety $Y$ of dimension $m$. Suppose that the contraction has length $l(R) = n-m+1$.
If $n \geq 2m-1$, then $\varphi$ is equidimensional.
\end{conjecture}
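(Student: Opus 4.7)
The plan is to argue by contradiction: assume $\varphi$ is not equidimensional, and fix an irreducible component $F_0$ of a fibre $\varphi^{-1}(y_0)$ with $\dim F_0 = d + k$, where $d := n - m$ is the general fibre dimension and $k \geq 1$.

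First I would apply Theorem \ref{theoremmain} to produce the birational diagram with $\mu \colon Y' \to Y$ and $\mu' \colon X' \to X$, with $Y'$ smooth and $\varphi' \colon X' \to Y'$ a $\PP^d$-bundle. From the commutativity $\varphi \circ \mu' = \mu \circ \varphi'$ and the surjectivity of $\mu'$, we see that $\varphi^{-1}(y_0) = \mu'((\varphi')^{-1}(\mu^{-1}(y_0)))$. Since $(\varphi')^{-1}(\mu^{-1}(y_0))$ decomposes into $\PP^d$-bundles over the irreducible components of $\mu^{-1}(y_0)$, some component $W \subset \mu^{-1}(y_0)$ must satisfy $\dim W + d \geq \dim F_0 = d + k$, so $\mu$ contracts a subvariety $W$ of $Y'$ of dimension at least $k$ down to the point $y_0$; in particular $k \leq m - 1$.

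Second, I would analyse the $\PP^d$-subbundle $\mathcal{P} := (\varphi')^{-1}(W) \subset X'$ and its image in $X$. The map $\mu'|_{\mathcal{P}}$ sends a $(\dim W)$-dimensional family of $\PP^d$-fibres of $\varphi'|_{\mathcal{P}}$ into the single fibre $\varphi^{-1}(y_0)$; by the proof of Theorem \ref{theoremmain} these images are limit $d$-cycles of the Chow family of $\varphi$, with each component normalised by a projective space (compare Theorem \ref{theorembirational}). Thus $F_0$ is covered by a $(\dim W)$-dimensional family of linear degenerations of $\PP^d$. Now the numerical hypothesis $n \geq 2m - 1$, i.e., $d \geq m-1$, combined with $\dim W \leq m - 1 \leq d$, constrains the $W$-family of $\PP^d$'s to fill out a subvariety of $X$ of dimension at most $2d$, entirely contained in $\varphi^{-1}(y_0)$. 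The hope is that comparing this with the generic behaviour of $\varphi$ (whose general fibre is a single $\PP^d$) yields a numerical incompatibility: roughly, the $W$-family forces the Chow class $[\PP^d]$ of a generic fibre to split as a sum of projective subclasses collapsing to one point of $Y$, contradicting the fact that $\varphi$ is elementary and its Chow family is parametrised birationally by $Y'$ rather than by $Y$ enlarged by $W$.

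The main obstacle will be this last point. Dimensional considerations alone only yield $k \leq m - 1$, which falls well short of the desired $k = 0$. Turning the heuristic incompatibility between ``$W$-family of $\PP^d$'s in $F_0$'' and ``single generic $\PP^d$-fibre'' into an honest contradiction will likely require a refined study of the birational geometry of $\mu$ and the normal bundle geometry of the $\PP^d$'s parametrised by $W$, along the lines of the partial results on the Beltrametti--Sommese conjecture \cite{Wis91b}, \cite{BSW92}, \cite{Somm85}, \cite{Ti10}, or an application of the theory of varieties covered by high-dimensional linear spaces \cite{Ein85}, \cite{ABW92}, \cite{Sa97}, \cite{NO11}.
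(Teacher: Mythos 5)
The statement you are trying to prove is stated in the paper as a \emph{conjecture}: the authors do not prove it, and they explicitly present it as an open problem generalising the Beltrametti--Sommese scroll conjecture, which itself is only known when the polarisation is very ample and $n \geq 2m$ (Wi\'sniewski) and in a few low-dimensional cases. So there is no ``paper proof'' to compare against, and any complete argument here would be a genuinely new result.

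Your proposal does not close the problem, and you candidly say so yourself. The first two steps are sound: applying Theorem \ref{theoremmain} to get the birational model $\varphi'\colon X' \to Y'$, locating a positive-dimensional subvariety $W \subset \mu^{-1}(y_0)$ with $\dim W \geq k$, and concluding $k \leq m-1$ is a correct but essentially trivial bound (it already follows from $\dim F_0 \leq \dim X - 1$ and semicontinuity of fibre dimension, without any use of the length hypothesis). The entire content of the conjecture is concentrated in your third step, and there the argument is a heuristic, not a proof: you assert that the $W$-parametrised family of degenerate $\PP^d$'s inside $F_0$ should be ``numerically incompatible'' with the generic fibre being a single $\PP^d$, but you give no mechanism that produces a contradiction. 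In particular, nothing in the statement that the Chow family is parametrised by $Y'$ forbids $\mu$ from contracting $W$; that is exactly what happens in genuine non-equidimensional fibrations (which exist when $n < 2m-1$), so any successful argument must use the inequality $n \geq 2m-1$ in an essential, quantitative way --- for instance via the normal bundle of the linear spaces $\mu'((\varphi')^{-1}(w))$, $w \in W$, inside the smooth ambient $X$, in the spirit of \cite{Ein85}, \cite{Wis91b}, \cite{NO11}. Until that step is supplied, the proposal is a plausible reduction of the conjecture to a statement about families of linear spaces, not a proof.
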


{\bf Acknowledgements.} We thank C. Araujo for sending us her inspiring preprint \cite{Ara09}.
We thank M. Andreatta, S. Druel, H. Hamm, F. Han, P. Popescu-Pampu, M. Wolff and J.A. Wi\'sniewski
for helpful communications.

\section{Notation and basic results} \label{sectionnotation}

We work over the complex field $\C$. 
A fibration is a projective surjective morphism \holom{\varphi}{X}{Y} with connected fibres
between normal varieties such that $\dim X>\dim Y$. 
The $\varphi$-equidimensional (resp. $\varphi$-smooth)  locus is the largest Zariski open subset $Y^* \subset Y$
such that for every $y \in Y^*$, the fibre $\fibre{\varphi}{y}$ has dimension $\dim X - \dim Y$ (resp. has dimension $\dim X - \dim Y$ and is smooth).

An elementary Mori contraction of a quasi-projective manifold $X$ is
a morphism with connected fibres \holom{\varphi}{X}{Y} onto a normal variety $Y$ such that the anticanonical divisor $K_X^*$ is $\varphi$-ample
and the numerical classes of curves contracted by $\varphi$ lie on an extremal ray $R \subset N_1(X)$.

We will use the notation of \cite[Ch.II]{Kol96}:
if $U \rightarrow V$ is a variety $U$ that is projective over some base $V$, we denote by
$\Rat(U/V)$ the space parameterising rational curves on $X$. If moreover $s\colon V \rightarrow U$ is a section,
we denote by $\Rat(s, U/V)$ the space parameterising rational curves passing through $s(V)$.
In the case where $V$ is a point and $x=s(V)$, we simply write $\Rat(U)$ and $\Rat(x, U)$.

Let $F$ be a normal, projective variety of dimension $d$.
If $\Hilb \subset \Rat(F)$ is an irreducible component parameterising a family of rational curves
that dominates $F$, then for a general point $x \in F$ one has
$$
\dim \Hilb_x = \dim \Hilb+1-d,
$$
where $\Hilb_x \subset \Rat(x, F)$ parameterises the members of $\Hilb$ passing through $x$.
If $\Hilb_x$ is proper, it follows from bend-and-break \cite{Mor79} that
$\dim \Hilb_x \leq d-1$.
Thus in this case we have 
\begin{equation} \label{eqnupperbound}
\dim \Hilb \leq 2d-2.
\end{equation}
Fix now an ample $\Q$-Cartier divisor $A$ on $F$. 
Let $C \subset F$ be a rational curve such that 
$$
A \cdot C = \min \{ A \cdot C' \ | \ C' \subset F \ \mbox{is a rational curve} \};
$$ 
then any irreducible component of $\Hilb \subset \Rat(F)$ containing $C$ 
is proper \cite[II.,Prop.2.14]{Kol96}, so it parameterises a family of rational curves $\Hilb$
that is unsplit in the sense of \cite[Defn.0.2]{CMS02}.
If moreover the irreducible component $\Hilb$ has the maximal dimension $2d-2$,
the family is doubly dominant, i.e. for every $x,y \in F$ 
there exists a member of the family $\Hilb$ that joins $x$ and $y$.
In this setting one can prove a version of Theorem \ref{theoremCMS} that includes normal varieties:

\begin{theorem} \cite[Main Thm.0.1]{CMS02}\footnote{A proof of this statement following 
the strategy in \cite{Keb02} can be found in \cite[\S 4]{CT07}.} \label{theoremCMSnormal}
Let $F$ be a normal, projective variety of dimension $d$ and $A$ an ample $\Q$-Cartier divisor on $F$. 
Suppose that there exists an irreducible component $\Hilb \subset \Rat(F)$
of dimension at least $2d-2$ such that for a curve $[C] \in \Hilb$ we have
$$
A \cdot C = \min \{ A \cdot C' \ | \ C' \subset F \ \mbox{is a rational curve} \}.
$$ 
Then $F$ is isomorphic to a projective space $\PP^d$ and $\Hilb$ parameterises the family of lines on $\PP^d$.
\end{theorem}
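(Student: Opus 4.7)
The plan is to follow Kebekus' approach in \cite{Keb02}, adapted to the normal setting as indicated by the footnote reference to \cite{CT07}. The minimality assumption on $A\cdot C$ guarantees that the family parameterised by $\Hilb$ is unsplit, and combined with $\dim\Hilb = 2d-2$ this forces $\dim\Hilb_x = d-1$ for every $x\in F$, with $\Hilb_x$ proper by bend-and-break. In particular the family is doubly dominant.

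Pick a general point $x \in F$, which lies in the smooth locus of $F$ because $F$ is normal. Form the normalisation of the universal family restricted to $\Hilb_x$: this is a $\PP^1$-bundle $\pi\colon \tilde U_x \to \Hilb_x$ equipped with a distinguished section $\sigma$ contracted to $x$ by the evaluation map $e\colon \tilde U_x \to F$. Differentiating $e$ along $\sigma$ defines a \emph{tangent map}
$$
\tau_x\colon \Hilb_x \dashrightarrow \PP(T_x F) \cong \PP^{d-1}.
$$
My first task is to show $\tau_x$ is a morphism and in fact finite. In the smooth case this is Kebekus' main theorem: a positive-dimensional fibre of $\tau_x$ would allow a bend-and-break deformation producing a reducible rational curve, hence a component of strictly smaller $A$-degree than a member of $\Hilb$, contradicting the minimality hypothesis. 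In the normal case I would run the same argument on the smooth locus and use the generality of $x$ to ensure the deformation remains there.

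Once finiteness is established, $\tau_x$ is a finite surjection between irreducible varieties of dimension $d-1$. The next step is to show $\tau_x$ has degree one. Here I would invoke the doubly dominant property together with a second bend-and-break at a general second point $y\in F$: the subfamily of curves of $\Hilb$ through both $x$ and $y$ is zero-dimensional, and two members sharing both $x$ and a tangent direction there would have a common intersection of length $\ge 2$ at $x$, forcing via bend-and-break the existence of a lower-degree rational curve. This forces $\tau_x$ to be birational, and since $\PP^{d-1}$ is normal $\tau_x$ is an isomorphism.

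From $\Hilb_x \cong \PP^{d-1}$ and the existence of the contracted section $\sigma$ one recognises $\tilde U_x$ as the blow-up of $\PP^d$ at a point, and the evaluation map $e$ descends to a birational map $\mu \colon \PP^d \dashrightarrow F$ under which the lines through the blown-up point correspond to members of $\Hilb_x$. The final step is to promote $\mu$ to an isomorphism: any $\mu$-exceptional divisor would be covered by curves of $A$-degree strictly smaller than that of a line, contradicting minimality; together with the normality of $F$ and Zariski's main theorem, this forces $\mu$ to be an isomorphism and $\Hilb$ to parameterise exactly the lines.

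The main obstacle, as in the original smooth argument, is the finiteness and birationality of $\tau_x$; in the present normal setting the additional difficulty is that deformations of a curve through $x$ may hit the singular locus of $F$, and one must verify carefully that the universal family over $\Hilb_x$ is still a $\PP^1$-bundle and that the bend-and-break deformation stays within the smooth locus for $x$ general. This is precisely where the normality hypothesis and the care taken in \cite{CT07} are indispensable.
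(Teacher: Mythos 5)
The paper does not prove this statement: it is quoted verbatim from Cho--Miyaoka--Shepherd-Barron \cite[Main Thm.0.1]{CMS02}, with the footnote pointing to \cite[\S 4]{CT07} for a proof in the normal setting along the lines of \cite{Keb02}. Your sketch is a correct outline of exactly that strategy (unsplit doubly dominant family, finiteness and birationality of the tangent map $\tau_x$, identification of the universal family over $\Hilb_x$ with the blow-up of $\PP^d$ at a point), so there is no alternative argument in the paper to compare it against; the only caveat is that the finiteness and injectivity of $\tau_x$ are substantial theorems of Kebekus on families of singular rational curves rather than one-line bend-and-break consequences, which is precisely why the paper defers to the literature here.
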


\begin{remark} \label{remarkbirationalcase} 
If $F$ is not smooth, it is in general quite hard to verify the condition $\dim \Hilb \geq 2d-2$ (cf. \cite{CT07}).
Since we work with an ambient space that is smooth, things are much simpler:
 
Let $X$ be a quasi-projective manifold, and let \holom{\varphi}{X}{Y} 
be a birational elementary contraction. Let $E \subset X$ be an irreducible component of the $\varphi$-exceptional locus
such that for an irreducible component $F$ of a general fibre of $E \rightarrow \varphi(E)$ the inequality \eqref{eqniw} is an equality.
We want to describe the structure of $F$: 
taking general hyperplane sections on $Y$ we can suppose that $E=F$, so the boundary case of \eqref{eqniw} simplifies to
$$
2 \dim F = \dim X + l(R)-1.
$$
Let $C \subset F \subset X$ be a rational curve passing through a general point $x \in F$ that has minimal degree with respect to $K_X^*$.
Then $C$ belongs to an irreducible family of rational curves $\Hilb \subset \Rat(F)$ that dominates $F$, moreover $\Hilb_x$ is proper.
Note now that any deformation of $C$ in $X$ is contracted by $\varphi$, hence contained in $F$.
Thus we can estimate $\dim \Hilb$ by applying Riemann--Roch on the {\em manifold~$X$}: 
$$
\dim \Hilb \geq K_X^* \cdot C-3+\dim X \geq l(R)-3+\dim X=2 \dim F-2.
$$
By \eqref{eqnupperbound} we see that these inequalities are in fact equalities, in particular
one has $K_X^* \cdot C=l(R)$. Thus if $\nu\colon \tilde F \rightarrow F$
is the normalisation and $\tilde \Hilb \subset \Rat(\tilde F)$ the family obtained by lifting the members of $\Hilb$, 
it satisfies the conditions of Theorem~\ref{theoremCMSnormal} with respect to the polarisation $A:=\nu^* K_X^*$.
Thus we have $\tilde F \simeq \PP^{\dim F}$ and $\nu^* K_X^* \equiv l(R) H$ with $H$ the hyperplane divisor. 
\end{remark}

\section{Degenerations of $\PP^d$} \label{sectiondegeneration}

In this section we prove our main results on degenerations of $\PP^d$ satisfying a
length condition. 

\begin{proposition} \label{propositionuniversalfamily}
Let $X$ be a normal, quasi-projective variety 
and let $\holom{\varphi}{X}{Y}$ be an equidimensional fibration
of relative dimension $d$ onto a normal variety such that the general fibre $F$ is isomorphic
to $\PP^d$. Let $A$ be a $\varphi$-ample $\Q$-Cartier divisor, and let $e \in \N$ be such that
$A|_F \equiv eH$ with $H$ the hyperplane divisor.
Suppose that the following length condition holds:
\begin{equation} \label{conditionlength}
A \cdot C \geq e \qquad \forall \ C \subset X \ \mbox{rational curve s.t.} \ \varphi(C)=pt.
\end{equation}
Then all the fibres are irreducible and generically reduced. 
Moreover the normalisation of any fibre is a projective space.
\end{proposition}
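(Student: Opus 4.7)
The plan is to apply Theorem \ref{theoremCMSnormal} to (the normalisation of) each irreducible component of a special fibre $F_0 := \fibre{\varphi}{y_0}$, using as test family the specialisation of lines from the generic $\varphi$-fibre.

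First I construct the family. Since the general fibre is $\PP^d$, it carries a $(2d-2)$-dimensional family of lines, which gives an irreducible component $\Hilb \subset \Rat(X/Y)$ of curves with $A$-degree $e$ and relative dimension $2d-2$ over $Y$. Because $A$ is $\varphi$-ample and the $A$-degree is fixed, the morphism $\Hilb \to Y$ is projective; its image contains an open subset of $Y$, so it is surjective. By upper semicontinuity of fibre dimension, $\dim \Hilb_{y_0} \geq 2d-2$ for every $y_0 \in Y$.

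Next I analyse $\Hilb_{y_0}$. Its members are rational curves in $F_0$ of $A$-degree $e$, which is minimal by the length condition \eqref{conditionlength}. Each such curve must be irreducible and generically reduced: otherwise it would split as a cycle $\sum C_j$ with each $C_j$ a rational curve of strictly smaller $A$-degree, contradicting \eqref{conditionlength}. Writing $F_0 = \sum m_i F_i$ and partitioning $\Hilb_{y_0}$ by which $F_i$ the (irreducible) curves lie on, at least one component, say $F_1$, carries a subfamily $\Hilb^1_{y_0}$ of dimension $\geq 2d-2$. The bend-and-break dimension bound \eqref{eqnupperbound} applied inside $F_1$ prevents the universal curve over $\Hilb^1_{y_0}$ from collapsing onto a proper subvariety of $F_1$, so $\Hilb^1_{y_0}$ dominates $F_1$.

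Now I apply Theorem \ref{theoremCMSnormal}. Let $\nu\colon \tilde F_1 \to F_1$ be the normalisation and $A' := \nu^*(A|_{F_1})$, which is ample since $A$ is $\varphi$-ample. Lifting $\Hilb^1_{y_0}$ through $\nu$ produces a dominating family of rational curves on $\tilde F_1$ of dimension $\geq 2d-2$ and of $A'$-degree $e$; the length condition \eqref{conditionlength} pulled back via $\nu$ shows that $e$ is the minimum $A'$-degree of rational curves on $\tilde F_1$. Thus Theorem \ref{theoremCMSnormal} gives $\tilde F_1 \simeq \PP^d$ and $\nu^*(A|_{F_1}) \equiv eH$, so in particular $A^d \cdot F_1 = e^d$.

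Finally I rule out further components and multiplicities. Cutting $Y$ with general hyperplanes through $y_0$ I obtain a smooth curve $C \subset Y$; the restriction $X_C := \fibre{\varphi}{C} \to C$ is equidimensional over a smooth curve, hence flat, so $A^d \cdot F_t$ is independent of $t \in C$ and equals $e^d$ on the general fibre. Hence $A^d \cdot F_0 = \sum m_i (A^d \cdot F_i) = e^d$, while $m_1 \geq 1$, $A^d \cdot F_1 = e^d$, and every $A^d \cdot F_i > 0$ by ampleness of $A|_{F_i}$. This forces $F_0 = F_1$ with $m_1 = 1$, so $F_0$ is irreducible and generically reduced, and its normalisation is $\PP^d$.

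The main obstacle is the decomposition step: one must ensure that the full dimension $2d-2$ of $\Hilb_{y_0}$ is concentrated on a single component $F_1$ (ruling out families whose members jump between components), which rests squarely on the irreducibility of the curves produced by the length hypothesis. A second delicate point is the intersection-theoretic degree computation for $X$ only normal; the reduction to a flat family over a smooth curve is what makes it work.
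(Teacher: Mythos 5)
Your construction of the family of lines, the properness and upper-semicontinuity argument, the passage to the normalisation of a component of the special fibre, and the application of Theorem \ref{theoremCMSnormal} all coincide with the paper's proof; the concluding degree count $\sum_i m_i (A^d\cdot F_i)=e^d$ forcing a single, generically reduced component is also the paper's finale (the paper phrases it as a contradiction via a line of $A$-degree $b<e$, which is cosmetic). The one place where you genuinely diverge is the justification of the key identity $A^d\cdot [F_0]=A^d\cdot [F]=e^d$, and there your argument has a gap.

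You cut $Y$ by general hyperplanes through $y_0$ and claim to obtain a \emph{smooth} curve $C$ with $\fibre{\varphi}{C}\rightarrow C$ \emph{flat}. Neither claim is automatic. First, $Y$ is only normal: if $y_0$ is a singular point of $Y$, a general complete-intersection curve through $y_0$ is in general singular at $y_0$ (curves through the vertex of a cone), since Bertini only controls the behaviour away from the imposed base point. Second, even when $C$ is smooth, equidimensionality of $\fibre{\varphi}{C}\rightarrow C$ does not imply flatness: over a smooth curve flatness is equivalent to every associated point of $\fibre{\varphi}{C}$ dominating $C$, and $\fibre{\varphi}{C}$ is obtained by cutting the merely normal (hence only $S_2$) variety $X$ by $\dim Y-1$ Cartier divisors all containing the special fibre, which for $\dim Y\geq 3$ can create embedded components along $F_0$; the naive fibre cycle of a non-flat family need not have constant degree. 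The paper sidesteps both issues by invoking the constancy of the degree in a \emph{well-defined family of proper algebraic cycles} \cite[Prop.3.12]{Kol96}: an equidimensional fibration from a normal variety onto a normal base defines such a family directly, with no auxiliary curve and no flatness required. Your proof is repaired simply by citing this statement in place of the flatness reduction (or, with more work, by normalising $C$ and checking that multiplicities can only increase under base change while controlling embedded points); as written, the reduction to a flat family over a smooth curve does not go through.
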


\begin{remark}
In general the degeneration behaviour of projective spaces can be quite complicated, 
for example it depends in a subtle manner on the geometry of the total space.
Consider $\holom{\varphi}{X}{C}$ a fibration from a normal variety $X$ onto a smooth curve $C$ such that
all the fibres are integral and the general fibre is isomorphic to $\PP^d$. If $X$ is smooth (or at least factorial),
then Tsen's theorem implies that $X \rightarrow C$ is a $\PP^d$-bundle (\cite[Lemma 2.17]{NO07}, cf. also the proof of \cite[Lemma 4]{DP10}).
This assumption can not be weakened:

\begin{example} \cite{Ara09}
Let $W_4 \subset \PP^6$ be the cone 
over the Veronese surface $R_4 \subset \PP^5$ (that is the $2$-uple embedding of $\PP^2$ in $\PP^5$).
The blow-up of $W_4$ in the vertex $P$ is isomorphic to the projectivised bundle $\PP(\sO_{\PP^2} \oplus \sO_{\PP^2}(-2)) \rightarrow \PP^2$; this desingularisation contracts a $\PP^2$ with normal bundle $\sO_{\PP^2}(-2)$ onto the vertex $P$.
In particular $W_4$ is a terminal $\Q$-factorial threefold and the canonical divisor $K_{W_4}$ is
not Gorenstein, but $2$-Gorenstein.

The base locus of a general pencil of hyperplane sections of $W_4 \subset \PP^6$ identifies
to a smooth quartic curve $C \subset R_4$. If we denote by $\holom{\mu}{X}{W_4}$ the blow-up 
in $C$, it is a terminal, $\Q$-factorial, $2$-Gorenstein threefold admitting a fibration 
$\holom{\varphi}{X}{\PP^1}$ such that the fibres are isomorphic to the members of the general pencil, in particular
they are integral.
Thus the general fibre $F$ is isomorphic to $R_4 \simeq \PP^2$, but the fibre $F_0$ corresponding
to the hyperplane section through the vertex $P$ is a cone over the quartic curve $C$.
In particular it is not normalised by $\PP^2$.

Let us note that $A:=K_X \otimes \mu^* \sO_{W_4}(2)$ is a $\varphi$-ample $\Q$-Cartier divisor
such that the restriction to a general fibre  is numerically equivalent to the hyperplane divisor $H \subset \PP^2$.
However this divisor is not Cartier, so Theorem \ref{theoremfujita} does not apply. 
Let us also note that under the $2$-uple embedding $\PP^2 \hookrightarrow \PP^5$,
a line is mapped onto a conic. If we degenerate the general fibre $F$ to $F_0$, these conics degenerate 
to a union of two lines $l_1 \cup l_2$ passing through the vertex of the cone $F_0$. Since we have $A \cdot l_i = \frac{1}{2}$,
the length condition \eqref{conditionlength} is not satisfied.  
\end{example}
\end{remark}

\begin{proof}[Proof of Proposition \ref{propositionuniversalfamily}]
By assumption the general fibre is a projective space.  
Let $\mathcal H \subset \Rat(X/Y)$ be the 
unique  
irreducible component such that a general point corresponds to a line $l$ contained in the general fibre $F$.
We have $A \cdot l=e$ and by \eqref{conditionlength} one has $A \cdot C \geq e$ for every rational curve $C$ contained in a fibre, so the variety $\mathcal H$ is proper over the base $Y$ \cite[II.,Prop.2.14]{Kol96}.
Since the general fibre of $\Hilb \rightarrow Y$ corresponds to the $2d-2$-dimensional family
of lines in the projective space $\PP^{d} \simeq F$, it follows by upper semicontinuity
that for every $0 \in Y$, all the irreducible components of the fibre $\Hilb_0$ have dimension at least $2d-2$.
Let $\Hilb_{0,i}$ be the normalisation of such an irreducible component and $\Univ_{0,i} \rightarrow \Hilb_{0,i}$ be the universal family over it.
The image of the evaluation morphism $\holom{p}{\Univ_{0,i}}{X}$ is an irreducible component $D_i$
of the set-theoretical fibre $(\fibre{\varphi}{0})_{\red}$, so it has dimension $d$. Let $\nu\colon \tilde D_i \rightarrow D_i$ be the normalisation,
then the family of rational curves  $\Hilb_{0,i}$ lifts to $\tilde D_i$, so $\Rat(\tilde D_i)$ has an irreducible component
of dimension at least $2d-2$.
Since for any rational curve $C \subset D_i \subset X$ we have $A \cdot C \geq e$ by \eqref{conditionlength},
the pull-back $\nu^* A$ also satisfies this inequality.
Moreover $\nu^* A$ has degree exactly $e$ on the rational curves parameterised by $\Hilb_{0,i}$.
Thus we conclude with Theorem \ref{theoremCMSnormal} that $\tilde{D_i}$ is isomorphic to a projective space.

We argue now by contradiction and suppose that there exists a $0 \in Y$ such that the fibre $F_0:=\fibre{\varphi}{0}$ is
reducible or not generically reduced. Then we can decompose the cycle
$$
[F_0] = \sum_i a_i [D_i]
$$
with $a_i \in \N$ and $D_i$ the irreducible components of the set-theoretical fibre $(\fibre{\varphi}{0})_{\red}$.
Since the degree is constant in a well-defined family of proper 
algebraic cycles \cite[Prop.3.12]{Kol96},
we have 
$$
e^{d}= [F] \cdot A^{d}= [F_0] \cdot A^d =  \sum_i a_i ([D_i] \cdot A^{d}).
$$ 
By assumption the sum on the right hand side is not trivial, so we have $[D_1] \cdot A^d<e^d$.
Denote by $\holom{\nu}{\PP^{d}}{D_1}$ the normalisation, then we have
$$
(\nu^* A)^{d}= [D_i] \cdot A^{d} < e^{d}.
$$
Thus we see that $\nu^* A \equiv bH$ with $0<b<e$. Yet this is impossible, since it implies that for a line $l \subset \PP^{d}$, we have
$$
A \cdot \nu(l) = \nu^* A \cdot l = b < e, 
$$
a contradiction to \eqref{conditionlength}. This shows that the cycle-theoretic fibre is smooth in a general point.
By \cite[I, Thm.6.5]{Kol96} this implies that the scheme-theoretic fibre is smooth in such a point. In particular
the scheme-theoretic fibre is generically reduced.
\end{proof}

\begin{lemma} \label{lemmaqcartier}
Let $\holom{\varphi}{X}{Y}$ be an equidimensional fibration from a normal variety $X$ onto a manifold $Y$. Suppose that
$\varphi$ is generically smooth and the general fibre $F$ is a Fano manifold with Picard number one.
If $\varphi$ has irreducible and generically reduced fibres, then $X$ is $\Q$-Gorenstein.
\end{lemma}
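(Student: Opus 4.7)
The plan is to combine a $\varphi$-ample Cartier divisor with $K_X$ to produce a Weil $\Q$-divisor on $X$ whose restriction to a general fibre vanishes in $\mathrm{Pic}(F)$, and then to argue that any such divisor is $\Q$-linearly equivalent to a pullback from $Y$. Since $F$ is Fano with Picard number one, $\mathrm{Pic}(F)$ is torsion-free of rank one, so $\mathrm{Pic}(F)=\Z\cdot H$ for an ample generator $H$. I would pick a $\varphi$-ample Cartier divisor $L$ on $X$ and write $L|_F\sim aH$ and $-K_F\sim bH$ for positive integers $a,b$; then set $\Delta:=bL+aK_X$, a Weil $\Q$-divisor class on $X$.

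For $y$ in the open dense locus where $\varphi$ is smooth, $X$ is smooth along $F=\varphi^{-1}(y)$; since $Y$ is smooth and $\varphi$ is equidimensional, $F$ is cut out scheme-theoretically by pullbacks of a regular system of parameters on $Y$, a codimension-$m$ local complete intersection with trivial normal bundle, so adjunction yields $K_X|_F=K_F$ and therefore $\Delta|_F=abH-abH=0$ in $\mathrm{Pic}(F)$. The assumption that every fibre is irreducible and generically reduced, together with equidimensionality and smoothness of $Y$, further gives $\varphi^*W=\varphi^{-1}(W)_{\mathrm{red}}$ with coefficient one for every prime divisor $W\subset Y$: the preimage is irreducible (because fibres are), and generic reducedness of closed-point fibres propagates via generic flatness over $W$ to the fibre over $\eta_W$, forcing the pullback of a local parameter of $W$ to have valuation one at the generic point of $\varphi^{-1}(W)_{\mathrm{red}}$. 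Consequently every vertical $\Q$-Weil divisor on $X$ is the pullback of a $\Q$-Weil divisor from $Y$.

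The crucial step -- which I expect to be the main obstacle -- is to deduce that $\Delta$ itself is $\Q$-linearly equivalent to a pullback $\varphi^*M$ from $Y$. Splitting $\Delta$ into horizontal and vertical parts, the previous paragraph handles the vertical part, so one needs to show that a horizontal Weil $\Q$-divisor on $X$ whose restriction to the generic fibre $F_\eta$ is trivial in $\mathrm{Pic}(F_\eta)\otimes\Q$ is itself $\Q$-linearly equivalent to a vertical divisor. For this I would invoke the Picard-type exact sequence $\mathrm{Pic}(Y)\otimes\Q\xrightarrow{\varphi^*}\Cl(X)\otimes\Q\xrightarrow{|_{F_\eta}}\mathrm{Pic}(F_\eta)\otimes\Q$, exact in the middle because $\varphi$ is a proper fibration with geometrically integral generic fibre and $\mathrm{Pic}(F_\eta)$ is torsion-free of rank one; equivalently, for $N$ clearing denominators push $\sO_X(N\Delta)$ forward to $Y$, take the reflexive hull on the smooth base to obtain a line bundle $\mathcal{N}$, and verify that the natural adjunction map $\varphi^*\mathcal{N}\to\sO_X(N\Delta)$ is an isomorphism in codimension one on $X$ using triviality on the generic fibre.

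Granting this, $\Delta=\varphi^*M$ in $\Cl(X)\otimes\Q$ for some $\Q$-Weil divisor $M$ on $Y$; smoothness of $Y$ makes $M$ automatically $\Q$-Cartier, and hence $\Delta$ is $\Q$-Cartier on $X$. Since $L$ is Cartier, $aK_X=\Delta-bL$ is $\Q$-Cartier, so $K_X$ is $\Q$-Cartier, i.e.\ $X$ is $\Q$-Gorenstein.
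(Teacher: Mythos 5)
Your proposal is correct and follows essentially the same route as the paper: both form the combination $aK_X+bL$ (the paper's $eK_X+dA$), observe that it restricts trivially to the general fibre, and use the irreducible, generically reduced fibres over the smooth base $Y$ to conclude that the resulting vertical Weil divisor is a pullback of a Cartier divisor. The only real difference is packaging in the middle step: where you invoke the localization sequence $\pic(Y)\otimes\Q\to\Cl(X)\otimes\Q\to\pic(F_\eta)\otimes\Q$, the paper twists $A$ by a sufficiently ample divisor from $Y$ to produce an explicit nonzero section of $\sO_X(eK_X+dA)$ whose divisor is vertical.
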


\begin{proof} Denote by $H$ the ample generator of $\pic(F)$ and let $A$ be a $\varphi$-ample Cartier divisor.
Set $e \in \N$ such that $A|_F \equiv eH$, and $d \in \N$ such that $K^*_F \equiv dH$.
The reflexive sheaf $\sO_X(e K_X+d A)$
is locally free in a neighborhood of a general fibre $F$ and the restriction
$\sO_X(e K_X+d A) \otimes \sO_F$ is isomorphic to the structure sheaf $\sO_F$. 
Up to replacing $A$ by $A \otimes \varphi^* H$ with $H$ a sufficiently ample Cartier divisor on $Y$ we can suppose without loss of
generality that
$$
H^0(X, \sO_X(e K_X+d A)) \simeq H^0(Y, \varphi_* \sO_X(e K_X+d A)) \neq 0.
$$
Thus we obtain a non-zero morphism
$$
\sO_X(eK_X^*) \rightarrow \sO_X(d A)
$$
which is an isomorphism on the general fibre $F$. The vanishing locus is thus a Weil divisor
$\sum k_i D_i$ such that for all $i$ we have $\varphi(D_i) \subsetneq X$.
Since $\varphi$ is equidimensional and all the fibres are irreducible and generically reduced
we see that $D_i = \varphi^* E_i$ with $E_i$ a prime divisor on $Y$. Since $Y$ is smooth, the divisor $E_i$
is Cartier, hence $\sum k_i D_i=\sum k_i \varphi^* E_i$ is Cartier. Thus we have an isomorphism
$$
\sO_X(eK_X^*) \simeq \sO_X(d A-\sum k_i \varphi^* E_i).
$$
The right hand side is Cartier, so $K_X$ is $\Q$-Cartier. 
\end{proof}

\begin{proposition} \label{propositionbundle}
In the situation of Proposition \ref{propositionuniversalfamily}, suppose that $Y$ is smooth.
Then the fibration $\varphi$ is a projective bundle.
\end{proposition}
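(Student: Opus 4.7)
The strategy is to verify the hypotheses of Fujita's theorem (Theorem~\ref{theoremfujita}): we must show that $X$ is smooth and exhibit a $\varphi$-ample Cartier divisor $A'$ on $X$ with $A'|_F\simeq H$ on a general fibre. Fujita's theorem will then give that $\varphi$ is a projective bundle.

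From Proposition~\ref{propositionuniversalfamily} we already know that every fibre $F_0$ of $\varphi$ is irreducible and generically reduced, with normalisation isomorphic to $\PP^d$. Since $Y$ is smooth and the general fibre is Fano of Picard number one, Lemma~\ref{lemmaqcartier} applies and $X$ is $\Q$-Gorenstein; in particular $-K_X$ is a $\Q$-Cartier divisor whose restriction to the general fibre is $(d+1)H$. Equidimensionality of $\varphi$ over the smooth base $Y$ moreover makes each $F_0$ a local complete intersection in $X$, so adjunction is available.

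The main technical step is to upgrade ``normalisation is $\PP^d$'' to ``$F_0$ is $\PP^d$''. I would combine the adjunction formula for $F_0\subset X$ with the length condition: pulling back to the normalisation $\nu\colon \PP^d\to F_0$, one has $\nu^*A\equiv eH$ realised as the minimal $A$-degree on rational curves of $F_0$, so a nontrivial conductor on $\PP^d$ would either produce a rational curve in $F_0$ of strictly smaller $A$-degree, contradicting $A\cdot C\geq e$, or be incompatible with the adjunction-computed class of $(K_X+F_0)|_{F_0}$ obtained from the $\Q$-Gorenstein structure. Hence every $\nu$ is an isomorphism, all fibres are $\PP^d$, $\varphi$ is smooth and proper, and $X$ is smooth because $Y$ is smooth.

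With $X$ smooth, $-K_X$ is a genuine Cartier divisor restricting to $(d+1)H$ on every fibre, and a suitable multiple of $A$ is Cartier restricting to a positive multiple of $H$. A $\Z$-combination of $A$ and $-K_X$, corrected by a $\varphi$-pullback from the smooth base $Y$, then yields the required $\varphi$-ample Cartier divisor $A'$ with $A'|_F\simeq H$; the length condition guarantees that the combination realising the minimal restriction class is attained. Fujita's theorem (Theorem~\ref{theoremfujita}) then concludes. The principal obstacle is the normality step: the Veronese-cone example recorded after Proposition~\ref{propositionuniversalfamily} shows that $\PP^d$-normalisation alone does not suffice, so the length condition must enter essentially in the conductor computation on $\PP^d$; once fibres are normal the rest is largely organising known material.
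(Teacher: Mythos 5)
Your route through Fujita's theorem (Theorem \ref{theoremfujita}) runs into exactly the obstruction this paper is designed to circumvent, and it surfaces fatally at your last step: you cannot in general manufacture a $\varphi$-ample Cartier divisor $A'$ with $A'|_F\simeq H$ as a $\Z$-combination of $A$ and $-K_X$ corrected by pull-backs from $Y$. Any such combination restricts on the general fibre to a multiple of $\gcd(e,d+1)\,H$, and in the application to Theorem \ref{theoremmain} one has $A=K_X^*$ and $e=l(R)=d+1$, so every combination restricts to a multiple of $(d+1)H$ and the class $H$ is never attained. The existence of a relative hyperplane class is precisely the ``local scroll'' question treated separately in Section \ref{sectionscroll}; there it requires the Leray spectral sequence argument of Lemma \ref{lemmalocaloone}, holds only under a topological hypothesis on $H^3(U_0,\Z)$, and moreover uses Theorem \ref{theoremmain} as an input, so invoking it here would be circular.

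The normality step is also a genuine gap rather than a routine verification. Since $\nu\colon\PP^d\to F_{0,\red}$ is finite and birational, a nontrivial conductor does not automatically produce rational curves of smaller $A$-degree (for instance, gluing two disjoint hyperplanes of $\PP^d$ to one another yields a non-normal irreducible variety on which all curve degrees are unchanged), so the first branch of your dichotomy fails; and the second branch rests on an adjunction/conductor identity $\nu^*\omega_{F_0}\simeq\omega_{\PP^d}(\mathfrak c)$ that presupposes the scheme-theoretic fibre is Cohen--Macaulay and a local complete intersection in $X$ --- but in Proposition \ref{propositionuniversalfamily} the variety $X$ is only normal, so ``equidimensional over a smooth base'' yields neither lci fibres nor flatness, and this generality is actually used (the proposition is applied to normalisations of fibre products in the proofs of Theorems \ref{theoremmain} and \ref{theorembirational}). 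The paper's proof takes an entirely different route: induction on $d$ after reducing to a one-dimensional base, choosing a section $s$ through a smooth point of the central fibre, and showing that the evaluation map from the universal family $\mathcal U_s$ of lines through $s$ identifies $X\setminus s(Y)$ with the complement of a section $E$ in a $\PP^1$-bundle over $\mathcal H_s$; after verifying the length condition for $\mathcal H_s\to Y$, the induction hypothesis makes the central fibre of $\mathcal U_s\to Y$ a $\PP^1$-bundle over $\PP^{d-1}$ contracting a section to a smooth point, whence $F_0\simeq\PP^d$ by Mori's classical argument. Normality of the special fibre is thus the \emph{output} of this construction, not an input obtained from a conductor computation, and no integral relative hyperplane class is ever needed.
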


\begin{proof}
We will proceed by induction on the relative dimension $D$, the case $d=1$ is \cite[II,Thm.2.8]{Kol96}.
Taking general hyperplane sections on $Y$ and arguing by induction on the dimension we can suppose without
loss of generality that $\varphi$ has at most finitely many singular fibres. The problem being local
we can suppose that $Y \subset \C^{\dim Y}$ is a polydisc around $0$ and $\varphi$ is smooth
over $Y \setminus 0$. Since all the fibres are generically reduced by Proposition \ref{propositionuniversalfamily}
we can choose a section $s\colon Y \rightarrow X$ such that $s(0) \subset F_{0, \nons}$ where $F_0:=\fibre{\varphi}{0}$.

As in the proof of Proposition \ref{propositionuniversalfamily} we denote by $\mathcal H \subset \Rat(X/Y)$ the 
unique irreducible component such that a general point corresponds to a line $l$ contained in the general fibre $F \simeq \PP^d$. 
There exists a unique irreducible component $\mathcal H_s \subset \Rat(s, X/Y)$ 
such that for general $y \in Y$ the lines in $\fibre{\varphi}{y} \simeq \PP^d$ passing through $s(y)$ are parameterised by $\mathcal H_s$.
If we denote by $\holom{\psi}{\mathcal H_s}{Y}$
the natural fibration, its general fibre is isomorphic to a projective space $\PP^{d-1}$.
Let $\holom{q}{\mathcal U_s}{\mathcal H_s}$ be the universal family, then by \cite[Ch.II, Cor.2.12]{Kol96}
the fibration $q$ is a $\PP^1$-bundle. Let $\holom{p}{\mathcal U_s}{X}$ be the evaluation morphism,
then $p$ is birational since this holds for the restriction to a general $\varphi$-fibre.
The variety $X$ being normal, we know by Zariski's main theorem that $p$ has connected fibres.
The family of rational curves being unsplit, it follows from bend-and-break that $p$ has finite fibres over
$X \setminus s(Y)$. Thus we have an isomorphism
\begin{equation} \label{theiso}
X \setminus s(Y) \simeq \mathcal U_s \setminus E,
\end{equation}
where $E:=(\fibre{p}{s(Y)})_{\red}$. 
Again by bend-and-break and connectedness of the fibres the algebraic set $E$ is irreducible, so it is a prime divisor with a finite,
birational morphism $q|_E\colon E \rightarrow \mathcal H_s$. Since $\mathcal H_s$ is normal, we see
by Zariski's main theorem that $q|_E$ is an isomorphism and $E$ is a $q$-section.

By Lemma \ref{lemmaqcartier} we know that $X$ is $\Q$-Gorenstein, so $\mathcal U_s \setminus E$
is $\Q$-Gorenstein.
Since $(\mathcal U_s \setminus E) \rightarrow \mathcal H_s$ is locally trivial (it is a $\C$-bundle),
it follows that $\mathcal H_s$ is $\Q$-Gorenstein. Since $\mathcal U_s \rightarrow \mathcal H_s$
is a $\PP^1$-bundle, it now follows that the total space $\mathcal U_s$ is $\Q$-Gorenstein.
The $\PP^1$-bundle $\mathcal U_s \rightarrow \mathcal H_s$ has a section $E$, so it is isomorphic to
the projectivised bundle $\PP(V) \rightarrow \mathcal H_s$ with $V:=\varphi_* \sO_X(E)$. 
By the canonical bundle formula we have
\begin{equation} \label{canonicalbundleformula}
K_{\mathcal U_s} \equiv q^*(K_{\mathcal H_s}+\det V) - 2 E.
\end{equation}
We claim that the following length condition holds: 
\begin{equation} \label{checkcondition}
K^*_{\mathcal H_s} \cdot C \geq d \qquad \forall \ C \subset \mathcal H_s \ \mbox{rational curve s.t.} \ \psi(C)=pt.
\end{equation}
Assuming this for the time being, let us see how to conclude: applying the induction hypothesis
to $\psi$, we see that $\mathcal H_s \rightarrow Y$ is a $\PP^{d-1}$-bundle. Since $q$ is a $\PP^1$-bundle,
we see that the central fibre of $\mathcal U_s \stackrel{\psi \circ q}{\rightarrow} Y$ is a $\PP^1$-bundle over $\PP^{d-1}$ that contracts a section onto a smooth point of $F_0$.
A classical argument \cite{Mor79} shows that $F_0 \simeq \PP^d$.

{\em Proof of the claim.}
The claim is obvious for curves in the general fibres which are isomorphic to $\PP^{d-1}$, so we can concentrate
on curves in the central fibre $\mathcal H_0 := (\fibre{\psi}{0})_{\red}$. We set
$\mathcal U_0 := \fibre{q}{\mathcal H_0}$ and denote by $\holom{\nu}{\tilde{\mathcal H}_0}{\mathcal H_0}$
the normalisation. Then $\mathcal U_0 \times_{\mathcal H_0} \tilde{\mathcal H}_0$
is normal and $\holom{\tilde q_0}{\mathcal U_0 \times_{\mathcal H_0} \tilde{\mathcal H}_0}{\tilde{\mathcal H}_0}$
is a $\PP^1$-bundle. By Proposition \ref{propositionuniversalfamily} we already know
that $F_{0, \red}$ is normalised by $\PP^d$, so we obtain a commutative diagram:
$$
 \xymatrix{ 
& & \PP^d \ar[d]
\\
\mathcal U_0 \times_{\mathcal H_0} \tilde{\mathcal H}_0
\ar[r]_{\tilde \nu} \ar[d]_{\tilde q} \ar[rru]^\mu
&
\mathcal U_0 \ar[r]^{p_0} \ar[d]^{q_0} 
&
F_{0, \red}
\\
\tilde{\mathcal H}_0 \ar[r]_\nu
&
\mathcal H_0
& 
} 
$$
Thus the curves parameterised by $\tilde{\mathcal H}_0$ are lines in $\PP^d$, in particular we have $\tilde{\mathcal H}_0 \simeq \PP^{d-1}$
and 
$$
\mathcal U_0 \times_{\mathcal H_0} \tilde{\mathcal H}_0
\simeq
\PP(\sO_{\PP^{d-1}} \oplus \sO_{\PP^{d-1}}(-1))
$$
with $\fibre{\tilde \nu}{E \cap \mathcal U_0}$ corresponding to the exceptional section.

In order to check the length condition \eqref{checkcondition} it is sufficient to do this for a curve $C_0 \subset \mathcal H_0$
such that $C_0=\nu(l_0)$ with $l_0$ a general line in $\tilde{\mathcal H}_0 \simeq \PP^{d-1}$.
We can lift $l_0$ to a curve $l' \subset \PP(\sO_{\PP^{d-1}} \oplus \sO_{\PP^{d-1}}(-1))$
that is disjoint from the exceptional section, so we obtain a curve $C':=\tilde \nu(l') \subset \mathcal U_0 \setminus E$
such that $C' \rightarrow C_0$ is birational. In particular we have
$$
K^*_{\mathcal H_s} \cdot C_0 =  q^* K^*_{\mathcal H_s} \cdot C', 
$$
so it is sufficient to show that the right hand side is equal to $d$.  The rational curve $C'$ does not meet 
$E$ and $\mu(l')$ is a line, so $C'$ corresponds to a point in  $\mathcal H$ that is not in $\mathcal H_s$. Using the isomorphism
\eqref{theiso} we can deform $C'$ to a curve $C'' \subset \mathcal U_s$ that is
a line in a general fibre $\fibre{\varphi}{y} \simeq \PP^d$ and which does not meet the point $s(y)$,
so $C''$ is disjoint from $E$. 
Yet for such a curve the Formula \eqref{canonicalbundleformula}
restricted to a general fibre immediately shows that $q^* K^*_{\mathcal H_s} \cdot C''=d$.
\end{proof}

\begin{remark} \label{remarknormal}
Proposition \ref{propositionbundle} should be true without the assumption that $Y$ is smooth.
In fact this assumption is only needed to assure via Lemma \ref{lemmaqcartier} 
that $X$ (and by consequence $\mathcal H_s$ and $\mathcal U_s$) are $\Q$-Gorenstein.
However our computations only use that these varieties are ``relatively  $\Q$-Gorenstein'', i.e.
some multiple of the canonical divisor is linearly equivalent to a Cartier divisor plus some Weil divisors
that are pull-backs from the base $Y$. We leave the technical details to the interested reader.
\end{remark}

\section{Proofs of main results}

\begin{proof}[Proof of Theorem \ref{theoremmain}]
By Theorem \ref{theoremCMS} the general $\varphi$-fibre is a projective space.

If $\varphi$ is equidimensional, then $A:=K_X^*$ satisfies the length 
condition \eqref{conditionlength} in Proposition \ref{propositionuniversalfamily} with $e=l(R)$. 
Thus we know that every fibre is generically reduced.
In particular for every $y \in Y$ there exists a point $x \in \fibre{\varphi}{y}$ such that the tangent map
has rank $\dim Y$ in $x$. Since $X$ is smooth, this implies that $Y$ is smooth in $y$. 
Conclude with Proposition \ref{propositionbundle}.

Suppose now that $\varphi$ is not equidimensional.
Let now $\bar Y$ be the closure of the $\varphi$-equidimensional locus in the Chow scheme $\chow{X}$ and let $\bar X \rightarrow \bar Y$ be the universal family. 
Let $Y' \rightarrow \bar Y$ be a desingularisation, and set $X'$ for the normalisation of $\bar X \times_{\bar Y} Y'$.
We denote by \holom{\varphi'}{X'}{Y'} the natural fibre space structure and by \holom{\mu'}{X'}{X}
the birational morphism induced by the map $\bar X \rightarrow X$.
By the rigidity lemma there exists a birational morphism \holom{\mu}{Y'}{Y}
such that $\mu \circ \varphi'=\varphi \circ \mu'$. Note also that the restriction of $\mu'$ to any $\varphi'$-fibre
is finite, so the pull-back $A:=(\mu')^* K_X^*$ is a $\varphi'$-ample Cartier divisor.
The divisor $A$ on $X'$ satisfies the length condition \eqref{conditionlength}
in Proposition \ref{propositionuniversalfamily} with $e=l(R)$. 
Since $Y'$ is smooth, we can conclude with the Proposition \ref{propositionbundle}.
\end{proof}

\begin{proof}[Proof of Theorem \ref{theorembirational}]
By Remark \ref{remarkbirationalcase} the general $\tilde \varphi$-fibre is a projective space.
Moreover if $\holom{\nu}{\tilde E}{E \subset X}$ denotes the normalisation, then
$A:=\nu^* K_X^*$ (restricted to the $\tilde \varphi$-equidimensional locus) satisfies the length 
condition \eqref{conditionlength} in Proposition \ref{propositionuniversalfamily} with $e=l(R)$. 
If $C \subset \tilde Z$ is a curve cut out by general hyperplane sections, then $C$ is smooth and  
the fibration $\fibre{\tilde \varphi}{C} \rightarrow C$ is equidimensional, so it is a projective bundle by Proposition \ref{propositionbundle}. Thus $\tilde \varphi$ is a projective bundle in codimension one. 

The proof of the second statement is analogous to the proof of the second statement in Theorem \ref{theoremmain}.
\end{proof}

\section{Local scroll structures} \label{sectionscroll}
We will use the following local version of \cite[Defn.3.3.1]{BS95}:

\begin{definition}
Let $X$ be a manifold, and let \holom{\varphi}{X}{Y} be a fibration onto a normal variety $Y$.
We say that $\varphi$ is locally a scroll if for every $y \in Y$ there exists an analytic neighborhood
$y \in U \subset Y$ such that on $X_U:=\fibre{\varphi}{U}$ there exists a relatively ample Cartier divisor
$L$ such that the general fibre $F$  polarised by $L|_F$ is isomorphic to $(\PP^d, H)$.
\end{definition}

Suppose now that we are in the situation of Theorem \ref{theoremmain} and denote by $Y_0 \subset Y$ the $\varphi$-equidimensional locus.
Then $Y_0$ is also the $\varphi$-smooth locus by Theorem \ref{theoremmain}. 
Note also that $Y$ is factorial and has at most rational singularities \cite{Kol86},
so it has at most canonical singularities \cite[Cor.5.24]{KM98}.
Thus if $\mu\colon Y' \rightarrow Y$ is the resolution of singularities in the statement of the theorem,
we know by \cite{Tak03} that for any point $y \in Y$ there exists a contractible analytic neighborhood $U \subset Y$ such that
$\fibre{\mu}{U}$ is simply connected. 
Thus the local systems $\varphi'_* \Z_{X'}$ and $R^2 \varphi'_* \Z_{X'}$ are trivial on $\fibre{\mu}{U}$.
Since the restriction of these local systems to $U_0:= Y_0 \cap U$ coincides with 
$(\varphi_* \Z_{X})_{U_0}$ and $(R^2 \varphi_* \Z_{X})_{U_0}$, the latter are trivial on $U_0$.

\begin{lemma} \label{lemmalocaloone}
In the situation of Theorem \ref{theoremmain}, using the notation above, suppose that $H^3(U_0, \Z)$ is torsion-free.
Then $\varphi$ is a scroll over $U$.
\end{lemma}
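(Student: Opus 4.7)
The plan is to construct an analytic line bundle $L$ on $X_U$ that restricts to $\sO_{\PP^d}(1)$ on a general fibre and then to verify relative ampleness. The construction proceeds in two main stages: first producing an integral cohomology class in $H^2(X_U,\Z)$, then lifting it to $\pic(X_U)$ via the exponential sequence.

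First I would exploit the projective bundle structure of $\varphi'\colon X' \to Y'$: let $L' = \sO_{X'}(1)$ be the tautological bundle and consider its Chern class $\xi' = c_1(L'|_{X'_{U'}}) \in H^2(X'_{U'},\Z)$. Using the Leray spectral sequence for $\varphi'$, with $R^0\varphi'_*\Z$ and $R^2\varphi'_*\Z$ trivial on $U'$ and $R^1\varphi'_*\Z = 0$, the class $\xi'$ projects onto a generator of $H^0(U', R^2\varphi'_*\Z) = \Z$.

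Next I would analyze the Leray spectral sequence for $\varphi\colon X_U \to U$. Since $U$ is contractible, $H^p(U, \underline{\Z}) = 0$ for $p \ge 1$, so the edge map $H^2(X_U,\Z) \to H^0(U, R^2 \varphi_*\Z)$ surjects onto the kernel of the transgression $d_3\colon H^0(U, R^2\varphi_*\Z) \to H^3(U,\Z) = 0$, once one verifies that $R^1\varphi_*\Z = 0$; this last vanishing follows from the fact that every fibre is irreducible, generically reduced and normalised by $\PP^d$ (Proposition \ref{propositionuniversalfamily}). To produce a section of $R^2\varphi_*\Z$ over $U$ whose restriction to $U_0$ is the hyperplane generator, I would use the commutative square $\varphi \circ \mu' = \mu \circ \varphi'$ together with proper base change and the triviality of $R^2\varphi'_*\Z$ on $U'$. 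This yields the desired class $\xi \in H^2(X_U, \Z)$. I would then lift $\xi$ to $\pic(X_U)$ via the exponential sequence: vanishing of $H^2(X_U,\sO_{X_U})$ follows from the Leray spectral sequence with $\sO$-coefficients (using $R^q\varphi_*\sO_{X_U} = 0$ for $q \ge 1$, valid because fibres are generically $\PP^d$ and the degeneration is controlled by Proposition \ref{propositionuniversalfamily}) together with $H^i(U,\sO_U) = 0$ for $i \ge 1$ as $U$ is Stein. The resulting $L \in \pic(X_U)$ has $L|_F \cong \sO_{\PP^d}(1)$ on a general fibre. For $\varphi$-relative ampleness, for any special fibre $F_y$ the pullback of $L$ to the normalisation $\PP^d$ has degree $l(R) = d + 1$ on a line (by the computation in Remark \ref{remarkbirationalcase} transplanted to the fibre-type setting), hence is the hyperplane bundle; thus $L|_{F_y}$ is ample on every fibre, and so $L$ is $\varphi$-relatively ample.

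The main obstacle is producing the integral section of $R^2\varphi_*\Z$ over $U$ whose restriction to $U_0$ generates. This requires delicate control of the constructible sheaf across the codimension-$\geq 2$ non-equidimensional locus $U \setminus U_0$. The hypothesis that $H^3(U_0,\Z)$ be torsion-free enters precisely here: over $\Q$ a lift always exists (from the decomposition theorem applied to $\mu'$, or from a direct comparison via the resolution), and torsion-freeness ensures that this rational lift can be promoted to an integral one without being obstructed by torsion in the comparison map between $R^2\varphi_*\Z$ on $U$ and $R^2\varphi'_*\Z$ on $U'$. Once this integrality is secured, the remaining steps are formal consequences of Stein-ness, Kodaira vanishing on the general $\PP^d$-fibres, and the fibre description already established in Section \ref{sectiondegeneration}.
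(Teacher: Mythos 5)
There is a genuine gap, and it sits exactly where you flag ``the main obstacle.'' Your plan runs the Leray spectral sequence for $\varphi$ over all of $U$ and invokes $H^3(U,\Z)=0$ by contractibility; but over $U\setminus U_0$ the morphism $\varphi$ is not smooth, not even equidimensional, so $R^2\varphi_*\Z$ is merely a constructible sheaf whose stalks at points of $U\setminus U_0$ are the $H^2$ of higher-dimensional, badly controlled fibres. Producing a section of $R^2\varphi_*\Z$ over $U$ restricting to the hyperplane generator on $U_0$ is precisely the hard point, and the sentence invoking ``proper base change and the triviality of $R^2\varphi'_*\Z$ on $U'$'' does not prove it: $\mu'$ is not an isomorphism over the bad fibres, and there is no constructed comparison map whose torsion obstruction lives in $H^3(U_0,\Z)$. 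Indeed, if your route worked as stated it would make the hypothesis on $H^3(U_0,\Z)$ superfluous, which is a warning sign. A second, related error: you apply Proposition \ref{propositionuniversalfamily} and Remark \ref{remarkbirationalcase} to \emph{all} fibres of $\varphi$ over $U$ (to get $R^1\varphi_*\Z=0$ everywhere and to check ampleness on special fibres via a normalisation by $\PP^d$); that proposition requires equidimensionality, and the fibres over $U\setminus U_0$ have dimension $>d$ and are not normalised by $\PP^d$.

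The paper's proof avoids the constructible-sheaf problem entirely by working only over $U_0$, where $\varphi$ is smooth and (by Takayama's theorem applied to the resolution $Y'\to Y$) the local systems $\varphi_*\Z_{X_0}$ and $R^2\varphi_*\Z_{X_0}$ are trivial. The Leray spectral sequence then gives an exact sequence
$$
0 \rightarrow E^{0,2}_\infty \rightarrow E^{0,2}_2 \simeq \Z[H] \rightarrow H^3(U_0,\Z),
$$
and the composite $r\colon \pic(X_0)\to H^2(X_0,\Z)\to E^{0,2}_\infty\to \Z[H]$ is nonzero because $\varphi$ is projective; hence its cokernel is torsion, injects into $H^3(U_0,\Z)$, and vanishes by the torsion-freeness hypothesis. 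This is the actual (and only) place the hypothesis is used --- not in promoting a rational lift to an integral one across $U\setminus U_0$. The resulting Cartier divisor $L_0$ on $X_0$ with $L_0|_F\equiv H$ is then extended over $X_U\setminus X_0$, which has codimension at least two since $\varphi$ is equidimensional in codimension one and contracts no divisor; no exponential-sequence lifting and no control of the special fibres is needed. If you want to salvage your outline, you must either restrict to $U_0$ from the start (reproducing the paper's argument) or supply a genuine descent statement for $R^2\varphi_*\Z$ across $U\setminus U_0$, which is not available from the results of Section \ref{sectiondegeneration}.
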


\begin{proof} We follow the argument in \cite[Lemma 3.3]{AM97}, \cite{Ara09}.
Set $X_0:=\fibre{\varphi}{U_0}$. 
We use the Leray spectral sequence for the smooth morphism $\holom{\varphi}{X_0}{U_0}$,
so set $E^{p,q}_2=H^p(U_0, R^q \varphi_* \Z_{X_0})$. We have $R^1 \varphi_* \Z_{X_0}=0$, 
so $E^{p,1}_2=0$ for all $p$. By what precedes we have $\varphi_* \Z_{X_0} \simeq \Z_{U_0}$ and $R^2 \varphi_* \Z_{X_0} \simeq \Z_{U_0}$. 
Since the fibre of $R^2 \varphi_* \Z_{X_0}$ in any point $y \in U_0$ identifies to
$H^2(X_y, \Z) \simeq \pic(\PP^d) \simeq \Z [H]$, we have
$$
E^{0,2}_2 \simeq \Z [H].
$$
The spectral sequence gives a map $d_3\colon E^{0,2}_3 \rightarrow E^{3,0}_3$ with kernel  
$E^{0,2}_4=E^{0,2}_\infty$. We have
$$
E^{0,2}_3  \simeq E^{0,2}_2 \simeq \Z [H]
$$
and
$$
E^{3,0}_3 \simeq E^{3,0}_2 \simeq H^3(U_0, \Z),
$$
so we get an exact sequence
$$
0 \rightarrow E^{0,2}_\infty \rightarrow E^{0,2}_2 \rightarrow H^3(U_0, \Z).
$$
Consider now the composed map
$$
r\colon \pic(X_0) \rightarrow H^2(X_0, \Z) \rightarrow E^{0,2}_\infty \rightarrow E^{0,2}_2 \simeq \Z [H].
$$
By the exact sequence above we know that the cokernel of $r$ injects into $H^3(U_0, \Z)$.
Since $\varphi$ is projective, the restriction map $r$ is not zero, so its cokernel is torsion. By hypothesis $H^3(U_0, \Z)$
is torsion-free, so $r$ is surjective.

Thus there exists a Cartier divisor $L_0$ on $X_0$ such that $L_0|_F \equiv H$.
Since $\varphi$ is smooth in codimension one and 
does not contract a divisor, the set $X_U \setminus X_0$ has codimension at least two in $X_U:=\fibre{\varphi}{U}$.
Therefore $L_0$ extends to a $\varphi$-ample Cartier divisor $L$ on $X_U$.
\end{proof}

\begin{remark} \label{remarklocaloone}
Let us note that if $\pi_1(U_0) \simeq \pi_2(U_0) \simeq \{1\}$ the technical condition in the preceding lemma is satisfied:
by the Hurewicz theorem \cite[Thm.4.32]{Hat02} one has $H_2(U_0, \Z)=0$, so $H^3(U_0, \Z)$ is torsion-free by \cite[Cor.3.3]{Hat02}.
The topological conditions $\pi_1(U_0) \simeq \pi_2(U_0)=\{1\}$ are known to be satisfied in the following two situations:
\begin{itemize}
\item $U$ is smooth\footnote{Note that $U \setminus U_0$ has codimension at least three, so
$\pi_i(U) \simeq \pi_i(U_0)$ for $i=1,2$.}; or
\item $\dim U \geq 4$ and $U$ has at most isolated lci singularities \cite[Kor.1.3]{Ham71}.
\end{itemize}
If $\dim U=3$ Hamm's theorem does not apply to $\pi_2(U_0)$ and it seems not to be clear if
$H^3(U_0, \Z)$ is torsion-free, even for cDV-singularities.
\end{remark}

\end{document}